\documentclass[11pt]{amsart}  
\usepackage{bm}
\usepackage{amscd}
\usepackage{amsmath}
\usepackage{amssymb}
\usepackage{amsthm}
\usepackage{epsf}
\usepackage{verbatim}
\input epsf.tex
\usepackage{graphicx}
\usepackage{mathtools}
\usepackage{enumerate}
\usepackage{color}
\setlength{\textwidth}{6.5in}
\setlength{\textheight}{8.5in}
\setlength{\oddsidemargin}{0pt}
\setlength{\evensidemargin}{0pt}
\setlength{\topmargin}{0pt}
\setlength{\marginparsep}{0pt}
\setlength{\marginparwidth}{1in}

\newtheorem{theorem}{Theorem}
\newtheorem{lemma}[theorem]{Lemma}

\newtheorem{remark}[theorem]{Remark}

\newtheorem{claim}[theorem]{Claim}
\def\hp{\widehat{p}}
\def\hps{\widehat{\psi}}

\newcommand{\W}{\mathcal{W}}

\newcommand{\ve}{\varepsilon}

\newcommand{\lo}{\left( 1+ o(1) \right)}

\newcommand{\beq}[2]{\begin{equation}\label{#1}#2\end{equation}}


\DeclarePairedDelimiter{\defaultDelim}{[}{]}

\DeclareMathOperator{\capPr}{\sf Pr}
\renewcommand{\Pr}[2][]{\capPr_{#1}\defaultDelim*{#2}}
\DeclareMathOperator{\capE}{\sf E}
\newcommand{\E}[2][]{\capE_{#1}\defaultDelim*{#2}}

\newcommand{\set}[1]{\left\{#1\right\}}

\def\hW{\widehat{\mathcal W}}
\def\a{\alpha}
\def\b{\beta}
\def\d{\delta}

\def\e{\varepsilon}

\def\G{\Gamma}

\def\th{\theta}
\def\Th{\Theta}
\def\l{\lambda}

\def\n{\nu}
\def\p{\pi}

\def\r{\rho}

\def\t{\tau}
\def\om{\omega}
\def\Om{\Omega}

\def\cW{{\mathcal W}}
\def\La{\Lambda}

\newcommand{\brac}[1]{\left(#1\right)}
\newcommand{\bfrac}[2]{\left(\frac{#1}{#2}\right)}
\begin{document}
\title{The covertime of a biased random walk on $G_{n,p}$}
\author{ Colin Cooper}\thanks{Department of Computer Science, Kings College, London WC2R 2LS. Research supported in part by EPSRC grant EP/M005038/1}
\author{Alan Frieze}\thanks{Department of Mathematical Sciences, Carnegie Mellon University, Pittsburgh PA15213. Research supported in part by NSF grant DMS0753472}
\author{Samantha Petti}\thanks{School of Mathematics, Georgia Tech., Atlanta, GA30313. This material is based upon work supported by the National Science Foundation Graduate Research Fellowship under Grant No. DGE-1650044.} 
\begin{abstract}
We analyze the covertime of a biased random walk on the random graph $G_{n,p}$. The walk is biased towrds visiting vertices of low degree and this makes the covertime less than in the unbiased case.
\end{abstract}
\maketitle
\section{Introduction}
Let $G=(V,E)$ be a connected graph with $n$ vertices and $m$ edges.
For $v\in V$, let $C_v$ be the expected time for a simple random
walk $\cW_v$ on $G$ starting at $v$, to visit every vertex of $G$. The
{\em vertex cover time} $C(G)$ of $G$ is defined as $C(G)=\max_{v\in
V}C_v$. The vertex cover time of connected graphs has been
extensively studied. It is a classic result of Aleliunas, Karp,
Lipton, Lov\'asz and Rackoff \cite{AKLLR} that $C(G) \le 2m(n-1)$. It
was shown by
 Feige \cite{Feige}, \cite{Feige1},
that for any connected graph $G$, the cover time satisfies
$(1-o(1))n\log n\leq C(G)\leq (1+o(1))\frac{4}{27}n^3.$
The asymptotic lower bound  is obtained by the complete graph $K_n$.
The asymptotic upper bound  is obtained by the {\em lollipop} graph, which consists of a path of length $n/3$ joined to a
clique of size $2n/3$.

The facts that the cover time of a simple random walk can be as large as $\Th(n^3)$ for some classes of graphs, and that it is never $o(n \log n)$ for any graph encourages a study of modified random walks whose performance may be better (in order of magnitude), either in general or for specific classes of graphs. If we restrict our attention to reversible random walks then the lower bound cannot be improved.

The properties of weighted random walk on an undirected graph are as follows, for more details see  \cite{AlFi}. Let $p(u,v)$ denote the probability of moving from vertex $u$ to vertex $v$ in a single step and let $\p(v)$ be the steady state distribution for the walk, assuming that it exists. A Markov chain is {\em reversible} if $\pi(u)p(u,v)=\pi(v) p(v,u)$. Suppose now that each undirected edge $e=\{u,v\}$ has a positive weight  $w_e=w_{v,u}=w_{u,v}$. The  transition probability of the associated random walk  at $v$ is $p(v,u)=w_{v,u}/w_v$, where $w_v=\sum_{u \in N(v)} w_{v,u}$, and $N(v)$ are the neighbors of $v$ in $G$. The stationary distribution of the walk at vertex $v$ is $\pi(v)=w_v/w$, where $w=\sum_{e \in E} w_e$. We can verify this and the fact that  weighted walks are always reversible since for any edge $e=\{u,v\}$, $\pi(u)p(u,v)=w_e/w=\pi(v)p(v,u)$.

As an example, for a simple random walk, we take $w_{u,v}=1$, $w_v=d(v)$ the degree of vertex $v \in V$, and $p(u,v)=1/d(v)$; the total weight $w=2m$, and so $\pi(v)=d(v)/2m$. An $O(n^2 \log n)$ upper bound on the cover time for any connected $n$ vertex graph $G$, was obtained by Ikeda, Kubo, Okumoto, and Yamashita \cite{IKOY} by using a weight $w_{u,v}=1/\sqrt{d(u)d(v)}$
for edge $e=\{u,v\}$. The fact that the above edge weight is multiplicative makes the walk hard to analyze. The use of a simpler but related weight of $w(u,v)=1/\min(d(u),  d(v))=\max(1/d(u),1/d(v))$ was studied in \cite{ACD}. Using this simplified weight David and Feige \cite{DFeige2017} proved an $O(n^2)$ upper bound on cover time for {\em any} connected $n$ vertex graph $G$. As the cover time of  paths and cycles by weighted walks is $\Th(n^2)$ this result is best possible. Instead of choosing a uniform random neighbor, the walks of  \cite{ACD},\cite{IKOY}  are biased towards lower degree vertices. In this way the walk tends to have a smaller cover time than the unbiased walk.

In this paper we study the cover time of the walk in \cite{ACD}, \cite{DFeige2017}. We will analyze its performance on the random graph $G_{n,p}$. The walk is biased towards lower degree vertices. In this way the walk will tend to have a smaller covertime than the unbiased walk. We use the following notation concerning a graph $G=(V,E)$: 
\begin{enumerate}
\item $d(v)=d_G(v)$ is the degree of $v\in V$.
\item We let $\psi(v,w)=\frac{1}{\min\set{d(v),d(w)}}$ for $\set{v,w}\in E$.
\item We let $\Psi(v)=\sum_{w\in N(v)}\psi(v,w)$.
\end{enumerate} 
The random walk $\cW_u=(X_0=u\in V,X_1,\ldots,X_t,\ldots)$ is then defined by
\beq{walk}{
\Pr{X_{t+1}=w\mid X_t=v}=\begin{cases}\frac{\psi(v,w)}{\Psi(v)}&w\in N(v)\\0&w\notin N(v)\end{cases}.
}
Let $C_u(G)$ be the expected time for $\cW_u$ to visit all of $V$ and let $\overline{C}(G)=\max_uC_u(G)$ denote the cover time of this random walk. We will prove the following theorem.
\begin{theorem}\label{main} Let $G \sim G_{n,p}$ where $p=\frac{c\log{n}}{n}$ and where $\om=(c-1)\log n\to\infty$. Then with high probability, $\overline{C}(G)\approx n \log{n}$.
\end{theorem}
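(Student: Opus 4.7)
I would follow the first-visit-time framework developed by Cooper and Frieze for cover times on $G_{n,p}$. Since $\omega=(c-1)\log n\to\infty$, standard Chernoff bounds yield that w.h.p.\ every vertex has degree $d(v)=(1+o(1))np$ and $G$ is connected. Consequently, for every edge $\{v,w\}$ we have $\psi(v,w)=(1+o(1))/(np)$, so $\Psi(v)=(1+o(1))d(v)/(np)=1+o(1)$, the total weight $\Psi=\sum_v\Psi(v)=(1+o(1))n$, and the stationary distribution is $\pi(v)=\Psi(v)/\Psi=(1+o(1))/n$ uniformly in $v$. The transition kernel is thus a multiplicative $(1+o(1))$-perturbation of the simple random walk on $G$; on a nearly regular graph the bias factor $1/\min\{d(v),d(w)\}$ is almost constant, so the biased and unbiased walks behave almost identically.

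\textbf{Mixing and hitting times.} Using the standard expansion/spectral estimates for $G_{n,p}$ in this dense regime, show the chain has spectral gap bounded away from $0$ and mixes in time $T=O(\log n)$ with $|P^t(u,v)-\pi(v)|\le n^{-10}$ for $t\ge T$. For each $v$, compute $\mathbb{E}_u[T_v]$ via the standard regeneration/return argument: if $R_v$ denotes the expected number of returns to $v$ within one mixing time starting from $v$, then $R_v=O(T\,\pi(v))=o(1)$, so visits to $v$ are essentially isolated and $\mathbb{E}_u[T_v]=(1+o(1))/\pi(v)=(1+o(1))n$ for every pair $u,v$. This also yields the tail $\mathbb{P}_u(T_v>t)=(1+o(1))\exp(-(1+o(1))t/n)$ for $t\gg T$.

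\textbf{Upper and lower bounds.} For the upper bound take $t_+=n\log n+n(\log\log n)^2$; the expected number of vertices unvisited by time $t_+$ is $n\exp(-(1+o(1))t_+/n)=o(1)$, so by Markov's inequality w.h.p.\ the walk has covered $V$, giving $\overline{C}(G)\le(1+o(1))n\log n$. For the matching lower bound take $t_-=n\log n-n(\log\log n)^2$; the expected number of unvisited vertices is $n\exp(-(1+o(1))t_-/n)\to\infty$, and a second moment argument, in which the covariance between the events ``$v$ unvisited'' and ``$w$ unvisited'' is controlled by the joint hitting probabilities (bounded using rapid mixing and the near-uniformity of $\pi$), concentrates this count around its mean and gives $\overline{C}(G)\ge(1+o(1))n\log n$ w.h.p.

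\textbf{Main obstacle.} The principal difficulty is proving $\mathbb{E}_u[T_v]=(1+o(1))n$ with the sharp constant $1$: the $(1+o(1))n\log n$ conclusion requires this precision, not just $\Theta(n)$. The decisive observation is that in our regime the chain is nearly uniform-stationary and rapidly mixing, so the return probability to any fixed vertex within one mixing time is $o(1)$; successive near-mixing-time windows then behave like almost-independent Bernoulli trials of success probability $(1+o(1))/n$ for hitting $v$, yielding the correct constant asymptotically. A secondary technical point is verifying that the second-moment computation survives the $(1+o(1))$-perturbation introduced by the biasing weights $\psi$, but this is immediate from the fact, established in the first step, that $\psi(v,w)$ equals the same value $(1+o(1))/(np)$ for every edge.
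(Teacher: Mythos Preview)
Your opening claim—that w.h.p.\ every vertex has degree $(1+o(1))np$—is false in the regime of the theorem, and everything downstream depends on it. The hypothesis $\omega=(c-1)\log n\to\infty$ permits $c$ to be any fixed constant larger than $1$; for such $c$ the Chernoff estimate $\Pr[\,|d(v)-np|>\varepsilon np\,]\le n^{-\Theta(\varepsilon^2 c)}$ does not beat the union bound over $n$ vertices unless $\varepsilon>\Theta(c^{-1/2})$, which is not $o(1)$. In fact, for $c$ close to $1$ there are w.h.p.\ vertices of degree $o(np)$, and the paper explicitly tracks a nonempty set $A$ of vertices of degree below $np/100$. This error propagates: if $v$ has a low-degree neighbour $w$ then $\psi(v,w)=1/d(w)$ is large, so $\Psi(v)$ is not $1+o(1)$ (the paper only gets $\Psi(v)\in[1,401]$) and $\pi_v$ is not $(1+o(1))/n$. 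Your bound ``$R_v=O(T\pi_v)=o(1)$'' is also wrong in general: it conflates returns during the first $T$ steps, when the walk is still near $v$, with the stationary mass $T\pi_v$. If $d(v)=1$, the walk bounces back to $v$ from its unique neighbour with probability bounded away from $0$, so $R_v-1=\Theta(1)$. Hence $\mathbb{E}_u[T_v]=(1+o(1))n$ fails for these exceptional vertices—precisely the ``main obstacle'' you name but do not resolve.

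The paper's proof is built around this difficulty. For the upper bound it partitions $V$: vertices whose $10$-neighbourhood avoids $A$ get $R_v=1+O(1/\log n)$ via a coupling with a biased birth--death chain on $\{0,\dots,4\}$; the remaining vertices get $R_v\le 1+C/k$ where $k$ is the smallest relevant degree, and a counting argument ($|V_k|\le(3\log n)^{k+1}$ for $k\le\log\log n$) shows they are rare enough that their contribution to $\sum_v(R_v/\pi_v)e^{-\pi_v t/R_v}$ is $o(n)$. For the lower bound one cannot use all vertices either: the paper extracts $\Omega(n/\operatorname{polylog} n)$ vertices $v$ with $d(w)\le d(v)$ for every neighbour $w$ (so that $\Psi(v)=1$ exactly and $\pi_v$ is minimal), and then runs the second-moment argument on that set via a contraction trick. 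Your sketch is essentially valid only in the easy sub-regime $np\ge n^{\delta}$, which the paper handles in a few lines; it misses the substantive work required for bounded $c$.
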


In Section \ref{fv lemma} we state the central lemma for the proof of Theorem \ref{main}. The ``first visit time lemma" bounds the probability that a vertex has not been visited in $t$ steps after a suitably defined mixing time. For a proof of this lemma, in the stated form, see \cite{CF}. In Section \ref{typical section}, we describe relevant properties of $G_{n,p}$ that hold with high probability, and compute quantities necessary for applying the first visit lemma under these conditions. In Section \ref{the proof} we prove Theorem \ref{main}.

For probabilistic inequalities we use the Chernoff bounds on the binomial $Bin(n,p)$:
\begin{align}
\Pr{Bin(n,p)\leq (1-\e)np}&\leq e^{-\e^2np/2}\text{ for }0\leq\e\leq1.\label{Chern1}\\
\Pr{Bin(n,p)\geq (1+\e)np}&\leq e^{-\e^2np/3}\text{ for }0\leq\e\leq1.\label{Chern2}\\
\Pr{Bin(n,p)\geq \a np}&\leq \bfrac{e}{\a}^{\a np}\text{ for }\a>0.\label{Chern3}
\end{align}
We sometimes write $A_n\approx B_n$ (resp. $A_n\lesssim B_n$) in place of $A_n=(1+o(1))B_n$ (resp. $A_n\leq(1+o(1))B_n$) as $n\to\infty$.

{\bf Some further notation:}
\begin{enumerate}
\item For $S\subseteq V$ we let $N(S)=N_G(S)=\set{w\notin S:\set{v,w}\in E}$ be the disjoint neighborhood of $S$. Let $d(S)= \sum_{v \in S} d(v)$. 
\item We abbreviate $N(\set{v})$ to $N(v)=N_G(v)$ for $v\in V$. Thus, $d(v)=|N(v)|$.
\item For $v\in V$ and positive integer $k$ we let $N_k(v)$ denote the set of vertices within distance at most $k$ of $v$.
\end{enumerate}
\section{The first visit time lemma}\label{fv lemma}
Let $G=(V,E)$ be a fixed graph, and let $u\in V$ be arbitrary. Let $\W_u$ denote the modified random walk defined in \eqref{walk} starting with $X_0=u$. The walk defines a reversible Markov chain with state space $V$. Let $P$ be the matrix of transition probabilities, and $\pi_v$ the stationary distribution of $P$. As previously mentioned,
 for $v\in V$,
\beq{pi}{
\p_v=\frac{\Psi(v)}{\sum_{u\in V}\Psi(u)}.
}
Considering the  walk $\cW_v$, starting
at $v$, let $r_t=\Pr{\cW_v(t)=v}$ be the probability  that this  walk
returns to $v$ at step $t \ge 0$, and let
$$R(z)=\sum_{t=0}^\infty r_tz^t$$
generate $r_t$. Our definition of return includes $r_0=1$.
For $R(z)$ and given $T$
 let
\[
R(T, z)=\sum_{j=0}^{T-1} r_jz^j.
\]
We choose a value of $T$ given by
\begin{equation}\label{T}
T=  L \log n,
\end{equation}
 and for this value of $T$
let $R_v=R(T,1)$.
In Lemma \ref{mixing time claim} we put a lower bound on the constant $L$ which is sufficient to imply that $T$ is the mixing time of $\W_u$, in a well-defined sense.

The following first visit time lemma bounds the probability a vertex has not been visited in time $T, T+1, \dots t$.
\begin{lemma} \label{firstvisit}[The first visit time lemma \cite{CF}]\\
Let $G$ be a graph satisfying the following conditions
\begin{enumerate}[(i)]
\item \label{mixing time} For all $t\geq T$, $\max_{u,x \in V} |P_u^{(t)}(x)- \pi_x|\leq n^{-3}$
\item \label{rt} For some (small) constant $\theta>0$ and some (large) constant $K >0$, $$ \min_{|z|\leq 1+ \frac{1}{KT}} |R(T,z)| \geq \theta$$
\item $T\pi_v=o(1)$ and $T\pi_v = \Omega(n^{-2})$
\end{enumerate}
Let $\mathcal{A}_v(t)$ be the event that the random walk $\mathcal{W}_u$ on graph $G$ does not visit vertex $v$ at steps $T, T+1, \dots t$. Then, uniformly in $v$,
$$\Pr{\mathcal{A}_v(t)} = \frac{(1+O(T\pi_v))}{(1+p_v)^t} +O(T^2 \pi_v e^{-t/KT})$$
where $p_v$ is given by the following formula, with $R_v= R_v(T,1)\geq 1$:
$$p_v= \frac{\pi_v}{R_v(1+O(T\pi_v))}.$$
In our applications, $T=O(\log n),\p_v=O(1/n)$ and $t=\Theta(n\log n)$. In which case we can write
\beq{PrAt}{
\Pr{\mathcal{A}_v(t)} \approx e^{-t\p_v/R_v}.
}
\end{lemma}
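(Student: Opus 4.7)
The plan is to use generating-function renewal analysis. Fix the starting vertex $u$ and let $\sigma=\inf\{s\ge T:X_s=v\}$ be the first visit to $v$ after mixing, with $g_s=\Pr{\sigma=T+s}$ and $G(z)=\sum_{s\ge 0}g_sz^s$. The key renewal identity is obtained by decomposing any visit at time $T+s$ according to the first post-mixing visit:
\[
\Pr{X_{T+s}=v}=\sum_{j=0}^{s}g_j\,r_{s-j},
\]
which in generating-function form reads $Q(z)=G(z)R(z)$, where $Q(z)=\sum_{s\ge 0}\Pr{X_{T+s}=v}z^s$. By hypothesis (i), $\Pr{X_{T+s}=v}=\pi_v+O(n^{-3})$ for $s\ge 0$, so $Q(z)\approx \pi_v/(1-z)$ up to a negligible analytic term. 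Truncating $R$ at $T$ (the tail contribution is negligible in the regime $|z|\le 1+1/(KT)$ thanks to mixing) one obtains $G(z)\approx \pi_v/[(1-z)R(T,z)]$.

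The probability of interest is $\Pr{\mathcal A_v(t)}=\Pr{\sigma>t}$, whose generating function is $A(z):=(1-G(z))/(1-z)$. Its large-$t$ asymptotics are governed by the closest singularity outside the unit disk, namely the unique root $z^\ast>1$ of $G(z)=1$ in a small neighborhood of $1$. The defining equation $G(z^\ast)=1$ is equivalent to
\[
(z^\ast-1)R(T,z^\ast)\approx \pi_v(z^\ast)^T,
\]
and iterating with $R(T,z^\ast)=R_v(1+O(Tp_v))$ and $(z^\ast)^T=1+O(Tp_v)$ yields $z^\ast=1+(\pi_v/R_v)(1+O(T\pi_v))=1+p_v$.

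The final step is singularity extraction. Applying Cauchy's coefficient formula to $A(z)$ on the contour $|z|=1+1/(KT)$, the simple pole at $z^\ast$ contributes the main term $(1+O(T\pi_v))(1+p_v)^{-t}$, and the integral along the outer circle is bounded by $O(T^2\pi_v e^{-t/KT})$, where the factor $T^2\pi_v$ absorbs the size of $A(z)$ near the contour. Hypothesis (ii), which asserts $|R(T,z)|\ge\theta$ on this disk, is essential: it guarantees that $G(z)$ is analytic and bounded on the contour, so that $z^\ast$ is the unique singularity of $A$ in the enlarged disk. Hypothesis (iii) then ensures that both terms in the final estimate are in the meaningful regime.

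The main obstacle is the error bookkeeping, specifically tracking how the $(1+O(T\pi_v))$ corrections propagate through the implicit definition of $z^\ast$, and justifying that the truncated return series $R(T,z)$ may replace the full $R(z)$ in the approximation of $G$. This requires careful interplay between the contour radius $1+1/(KT)$, the mixing-time window $T$, and the stationary probability $\pi_v$, which is exactly the balance that hypotheses (i)--(iii) are calibrated to provide.
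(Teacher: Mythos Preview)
The paper does not prove this lemma at all: it is quoted verbatim from \cite{CF} (see also \cite{Hyp}), and the text explicitly says ``For a proof of this lemma, in the stated form, see \cite{CF}.'' So there is no in-paper argument to compare your proposal against.

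That said, your outline is precisely the generating-function renewal argument that underlies the proof in \cite{CF}: the convolution identity $Q(z)=G(z)R(z)$, the replacement of $Q(z)$ by $\pi_v/(1-z)$ using mixing (hypothesis (i)), the location of the unique real root $z^\ast=1+p_v$ of $G(z)=1$ near $1$ (with hypothesis (ii) ensuring $R(T,z)$ is bounded away from zero on the enlarged disk so that no spurious zeros of $R(T,z)$ interfere), and Cauchy extraction on $|z|=1+1/(KT)$ to produce the main term plus the $O(T^2\pi_v e^{-t/KT})$ remainder. Your identification of the error-tracking as the main technical burden is also accurate: the passage from $R(z)$ to the truncated $R(T,z)$ and the control of $(z^\ast)^T=1+O(Tp_v)$ are exactly where hypotheses (i)--(iii) are consumed. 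In short, your sketch matches the cited proof's strategy; a full write-up would need to make the residue computation at $z^\ast$ and the bound on the contour integral explicit, but there is no missing idea.
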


 We rely on the following lemma to show Condition ($\ref{rt}$) of the first visit time lemma.

\begin{lemma}\label{helper}
Lwmma 18 of \cite{Hyp} proves the following: Let $v$ be a vertex of an arbitrary graph $G$. Let $T$ be a mixing time satisfying (\ref{mixing time}). If $T= o(n^3)$, $T\pi_v=o(1)$ and $R_v$ is bounded above by a constant, then Condition (\ref{rt}) of Lemma \ref{firstvisit} hold for $\theta= 1/4$ and any constant $K \geq 3R_v$.
\end{lemma}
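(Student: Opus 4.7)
The goal is to verify that $|R(T,z)| \geq 1/4$ holds uniformly on the closed disk $\{z : |z| \leq 1 + 1/(KT)\}$ under the stated hypotheses. The approach rests on the fact that $R(T,z) = \sum_{j=0}^{T-1} r_j z^j$ is a polynomial of degree at most $T-1$ with non-negative coefficients $r_j \geq 0$, constant term $r_0 = 1$, and $R(T,1) = R_v$ bounded above by a constant, combined with the constraint $K \geq 3R_v$.

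The first step is an elementary but crucial size bound on the disk: for $|z| \leq 1+1/(KT)$ and $j \leq T-1$,
\[
|z|^j \leq \Big(1 + \tfrac{1}{KT}\Big)^{T-1} \leq e^{1/K}.
\]
This allows the reverse triangle inequality applied to $R(T,z)$, isolating the constant term $r_0 = 1$, to give
\[
|R(T,z)| \geq r_0 - \sum_{j=1}^{T-1} r_j |z|^j \geq 1 - e^{1/K}(R_v - 1).
\]
With $K \geq 3R_v$ so that $e^{1/K} \leq e^{1/(3R_v)}$, a direct check shows this lower bound is at least $1/4$ whenever $R_v$ is not too large, which is the ``easy'' case.

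For larger $R_v$ (still a bounded constant), the plain triangle bound deteriorates, and I would refine the argument by splitting the sum around the mixing scale. Because $T$ is a mixing time and $T\pi_v = o(1)$, the tail terms $r_j$ for $j$ past mixing lie within $n^{-3}$ of $\pi_v$, so their combined contribution to $|R(T,z)|$ is $O(T\pi_v) = o(1)$. The ``head'' of the sum then contains only logarithmically many terms and is treated analytically by writing $R(T,z) = R(T,1) + (z-1)\, Q(z)$ with $Q(z) = \sum_{k=0}^{T-2} z^k \sum_{j=k+1}^{T-1} r_j$, and bounding $|Q(z)|$ on the disk via $|z|^k \leq e^{1/K}$ together with the constant upper bound on the partial sums of $r_j$ coming from $R_v \leq$ const.

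The main obstacle is obtaining a uniform lower bound on the \emph{full} disk rather than merely a small neighborhood of $z=1$: for $z$ near $-1-1/(KT)$, sign cancellations in the alternating sum $\sum r_j z^j$ can shrink $|R(T,z)|$, and the reverse triangle inequality becomes essentially tight. The role of the hypothesis $K \geq 3R_v$ is precisely to make $e^{1/K}$ close enough to $1$ relative to $R_v$ that either the simple inequality above suffices, or the head/tail mixing decomposition closes with sufficient margin, yielding the required $\theta = 1/4$ lower bound and allowing any such $K$ to be used as the parameter in Lemma \ref{firstvisit}.
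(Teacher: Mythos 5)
First, note that the paper itself does not prove this statement: it is quoted verbatim from Lemma 18 of \cite{Hyp}, so your attempt has to stand on its own. As written it has a genuine gap. Your ``easy case'' bound $|R(T,z)|\geq 1-e^{1/K}(R_v-1)$ with $K\geq 3R_v$ yields $\theta=1/4$ only when $R_v-1\leq \tfrac34 e^{-1/K}$, i.e.\ roughly $R_v\leq 1.5$; but the lemma is stated for $R_v$ bounded by an \emph{arbitrary} constant, and this generality is actually needed in the present paper (Lemma \ref{rv} only gives $R_v\leq 1+C/d(v)$ with $C$ an absolute constant, which for low-degree vertices is far above $1.5$). Your proposed repair for larger $R_v$ does not close: condition (\ref{mixing time}) controls $P^{(t)}_u(x)$ only for $t\geq T$, whereas $R(T,z)=\sum_{j=0}^{T-1}r_jz^j$ contains only pre-mixing indices, so the claim that ``tail terms past mixing lie within $n^{-3}$ of $\pi_v$'' concerns terms that do not appear in the sum, and nothing forces $r_j\approx\pi_v$ for $j<T$ (indeed $R_v$ measures exactly how large these can be); likewise the ``logarithmically many head terms'' claim uses $T=O(\log n)$, which is not part of the hypotheses ($T=o(n^3)$). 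The identity $R(T,z)=R(T,1)+(z-1)Q(z)$ is correct, but the only available bound $|Q(z)|\leq e^{1/K}(T-1)R_v$ makes $(z-1)Q(z)$ controllable only when $|z-1|=O(1/(KT))$; at the critical region $z$ near $-1-\tfrac{1}{KT}$, which you yourself identify as the main obstacle, $|z-1|\approx 2$ and the bound is vacuous. So neither branch of your argument covers, say, $R_v=3$ at $z=-1$.

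The deeper issue is that the only facts about the coefficients you use are $r_j\geq 0$, $r_0=1$ and $\sum_{j<T}r_j=R_v=O(1)$, and these alone do not imply the conclusion: the sequence $r_0=r_1=1$, $r_j=0$ for $j\geq 2$ satisfies all of them yet gives $R(T,-1)=0<1/4$. Such a sequence is of course not realizable as return probabilities of a reversible chain, but that is precisely the point: any complete proof must exploit structural properties of return probabilities beyond nonnegativity --- e.g.\ the spectral representation $r_t=\sum_j c_j\lambda_j^t$ with $c_j\geq 0$ valid for reversible chains, which is what rules out the cancellation near $z=-1$ --- and this is the kind of input the cited proof in \cite{Hyp} relies on. To fix your write-up you should either restrict the claim to $R_v=1+o(1)$ (which would not suffice for this paper's application) or bring in reversibility explicitly; as it stands the argument establishes the lemma only in a regime strictly smaller than what is asserted and used.
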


\section{Properties of typical graphs $G \sim G_{n,p},p=\frac{c\log n}{n}$}\label{typical section}
The following lemma defines a \textit{typical} graph and shows that with high probability a graph $G \sim G_{n,p},p=\frac{c\log n}{n}$ is typical. In Lemma \ref{mixing time claim} we show that $T$ as given in Equation \eqref{T} is the mixing time for a typical graph, and in Lemma \ref{rv}, we bound $R_v$ for a typical graph.  
\begin{lemma} \label{typical} Let $\ve > 0$ be an arbitrary small constant. Consider $G \sim G_{n,p},p=\frac{c\log n}{n}$ with $c\geq1$ and $\om=(c-1)\log n\to\infty$. The graph $G$ is ``typical" if all of the following conditions are satisfied:
 \begin{enumerate}[(a)]
\item \label{con} $G$ is connected.
\item \label{t1} There are at most $n^{1-\ve^2c/4}$ vertices with degree less than $ (1-\ve) np$.
\item \label{t2} There are at most $n^{1-\ve^2c/4}$ vertices with degree more than $(1+\ve) np$.
\item \label{maxd} No vertex has degree more than $4np$.
\item \label{Vk} Let $V_k$ denote the vertices of degree $k$. Then $|V_k|\leq (3\log n)^{k+1}$ for $k\leq \La=\log\log n$.
\item \label{set A} Let $A$ be the set of vertices with degree less than $ np/100$. Then (i) $|A|< n^{17/12-c}$ and (ii) no vertex of $A$ is within distance $\La$ of a cycle of size less than $\La$ and (iii) for all $u , v \in A$, the distance $dist(u,v) >\La$.
\item \label{subgraphs} Suppose that $np\leq n^{1/100}$. Then $G$ contains no subgraphs $H$ with number of vertices $v_H\leq 50$ and with number of edges $e_H \geq v_H +1$. 
\item \label{big s} For all $S\subseteq V=[n]$ such that $1/p \leq |S| \leq \frac{n}{2}$, $e(S, \overline{S}) \geq \frac12|S|(n-|S|)p$ where $e(S, \overline{S}) $ is the number of edges between the set $S$ and its complement $\overline{S}$.
\item \label{small s} For all $S\subseteq V$ such that $|S| < 1/p$, $e(S, S)< |S|np/1000$, where $e(S, S) $ is the number of edges within the set $S$.
\end{enumerate}  
With high probability, $G$ is typical. 
\end{lemma}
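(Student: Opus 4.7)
The plan is to establish each of conditions (a)--(i) with failure probability $o(1)$ and then take a union bound over the nine events. The workhorses are first-moment calculations combined with the Chernoff bounds \eqref{Chern1}--\eqref{Chern3}.

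Conditions (a)--(d) are essentially routine. Part (a) is the classical connectivity threshold, valid since $np - \log n = \om \to \infty$. For (b), \eqref{Chern1} applied to $d(v) \sim \mathrm{Bin}(n-1, p)$ gives $\Pr{d(v) \leq (1-\ve)np} \leq n^{-\ve^2 c/2}$, so the count $Z$ of low-degree vertices satisfies $\E{Z} \leq n^{1 - \ve^2 c/2}$; Markov's inequality then yields $Z \leq n^{1-\ve^2 c/4}$ with failure probability $n^{-\ve^2 c/4} = o(1)$. Condition (c) is symmetric via \eqref{Chern2}, and (d) follows from \eqref{Chern3} with $\a = 4$ and a union bound over the $n$ vertices.

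For (e), I bound $\Pr{d(v)=k} \leq (np)^k/k!$, so $\E{|V_k|} \leq n(c\log n)^k/k!$ is dominated by $(3\log n)^{k+1}$; Markov plus a union bound over $k \leq \La$ suffices. For (f)(i), a Chernoff lower tail on the event $d(v) < np/100$ gives a per-vertex probability small enough that $\E{|A|} \leq n^{17/12-c}$, and Markov supplies the bound. Parts (ii) and (iii) are structural: I enumerate configurations in which a low-degree vertex lies within distance $\La$ of a short cycle, or two low-degree vertices lie within distance $\La$ of each other. Each such configuration involves $O(\La)$ vertices and at least as many edges, so the expected count is bounded by $n^{O(\La)} p^{O(\La)}$ times the (polynomially small) low-degree tail; since $(np)^{O(\La)}$ is only polylogarithmic, the low-degree factor drives the expectation to $o(1)$.

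Condition (g) is a standard first-moment bound: for each $H$ with $v_H \leq 50$ and $e_H \geq v_H+1$, the expected copy count is at most $n^{v_H} p^{e_H} \leq n^{v_H - (v_H+1)(99/100)} = o(1)$ when $np \leq n^{1/100}$, and there are only $O(1)$ relevant isomorphism types. For the cut-size conditions, $e(S, \overline{S}) \sim \mathrm{Bin}(|S|(n-|S|), p)$ concentrates sharply by \eqref{Chern1}, so the failure probability $\exp(-\Omega(|S|(n-|S|)p))$ absorbs the subset count $\binom{n}{|S|} \leq (en/|S|)^{|S|}$ once $|S| \geq 1/p$; this gives (h). Condition (i) follows the same template using \eqref{Chern3} on $\mathrm{Bin}(\binom{|S|}{2}, p)$ with $\a$ chosen so that $\a \binom{|S|}{2} p$ matches $|S|np/1000$. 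The main obstacle will be the structural parts (f)(ii)--(iii), where the events are not simple degree tails: one must carefully enumerate the relevant paths and short cycles and verify that the number of edge constraints exceeds the number of vertex choices by enough to absorb the $n$-factors together with the low-degree tail. The hypothesis $\om = (c-1)\log n \to \infty$ provides exactly the slack needed to tolerate the polylogarithmic losses from $\La = \log\log n$.
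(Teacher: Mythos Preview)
Your overall structure matches the paper's proof, but there is a genuine gap in part~(e). Your bound $\Pr{d(v)=k}\le (np)^k/k!$ discards the factor $(1-p)^{n-1-k}$, and so the resulting estimate $\E{|V_k|}\le n(c\log n)^k/k!$ is \emph{not} dominated by $(3\log n)^{k+1}$: already for $k=1$ you are asserting $n\cdot c\log n\le (3\log n)^2$, which is false. The missing ingredient is precisely that $(1-p)^{n-1-k}\approx e^{-np}=n^{-c}$, which cancels the leading factor of $n$. With this factor restored (as the paper does), one gets $\E{|V_k|}\le (np)^k n^{1-c}e^{o(1)}$, which for $c\le 2$ is at most $(2\log n)^k$ and for $c>2$ is $o(1)$, so Markov finishes the job.

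A related issue arises in (f): the quadratic Chernoff bound \eqref{Chern1} with $\ve=0.99$ only yields $\Pr{d(v)<np/100}\le n^{-0.49c}$, which for $c$ close to $1$ is too weak. In part~(i) this gives $\E{|A|}\le n^{1-0.49c}\approx n^{0.51}$, and Markov cannot then produce $|A|<n^{17/12-c}\approx n^{0.42}$. More seriously, in part~(iii) the expected number of short paths with both endpoints in $A$ is then of order $n^{1-0.98c}\cdot(\text{polylog})$, which does not tend to zero when $c$ is just above $1$. The paper avoids this by computing the binomial tail directly, obtaining $\Pr{d(v)<np/100}\le n^{1/3-c}$ for $c\le 10$; this sharper exponent is what makes (f)(i) and (f)(iii) go through. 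You need either this direct computation or the relative-entropy form of Chernoff, not \eqref{Chern1}.
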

\begin{proof}
 (\ref{con}) This is a standard result and follows from Erd\H{o}s and R\'enyi \cite{ER}.
(\ref{t1}) By the Chernoff bound \eqref{Chern1},
$$\Pr{ d(v)  < (1-\ve)np}= \Pr{Bin(n-1,p)\leq (1-\ve)(n-1)p}\leq \exp\left( -\frac{ \ve^2np}{2+o(1)}\right)\leq n^{-\ve^2c/3}.$$
Let $X$ be the number of vertices of degree less than $(1-\ve)np$. The Markov inequality implies that 
$$\Pr{ X \geq n^{1-\ve^2c/4}} < \Pr{ X \geq n^{\ve^2 c/12} \E{X}}< n^{-\ve^2 c/12} =o(1).$$
(\ref{t2}) By the Chernoff bound \eqref{Chern2}, 
$$\Pr{ d(v)  > (1+\ve)np}= \Pr{Bin(n-1,p)\geq (1+\e)np}\leq \exp\left( -\frac{ \ve^2 np}{3}\right)= n^{-\ve^2c/3}.$$  
Let $X$ be the number of vertices of degree greater than $(1+ \ve)np$. The Markov inequality implies that 
$$\Pr{ X \geq n^{1-\ve^2c/4}} < \Pr{ X \geq n^{\ve^2c/12} \E{X}}< n^{-\e^2c/12}=o(1).$$  
(\ref{maxd}) Let $X$ be the number of vertices with degree greater than $4np$.  Then by the Chernoff bound \eqref{Chern3},
$$\Pr{X>0} \leq \E{X}\leq  n\Pr{Bin(n,p)  > 4np} \leq n \bfrac{e}{4}^{4np}< n^{1-3c/2}=o(1).$$
(\ref{Vk}) We have,
\beq{Vksize}{
\E{|V_k|}=n\binom{n-1}{k}p^k(1-p)^{n-k}\leq (np)^kn^{1-c}\exp\bfrac{ck\log n}{n}.
}
If $c\leq 2$ then the RHS of \eqref{Vksize} is at most $(2\log n)^k$ the claim now follows from the Markov inequality. When $c>2$, the RHS of \eqref{Vksize} is $o(1)$ for $k\leq \La$ and so with high probability we have $V_k=\emptyset$ for $k\leq \La$.

\noindent (\ref{set A}) First we observe that for any $a \in \{1,2,3\}$ and $b \in \{0,1,2\}$, 
\begin{align} 
\Pr{ Bin(n-a, p) < \frac{np}{100}-b}&= \sum_{i=0}^{\frac{ c \log n}{100} -b}\binom{ n -a}{i} p^i \left(1-p\right)^{n-a-i}\nonumber\\
& \leq \sum_{i=0}^{\frac{np}{100}} \left( \frac{ec \log n}{i}\right)^in^{-c}\exp\bfrac{(a+i)c\log n}{n}\label{sizeA}.
\end{align}
If $c\leq 10$ then we bound the RHS of \eqref{sizeA} by $n^{1/3-c}$. If $c\geq 10$ then we bound the RHS of \eqref{sizeA} by $n^{-2c/3}$. In summary, we bound the RHS of \eqref{sizeA} by $ n^{-\min\set{c-1/3,2c/3}}$.

To show (i), we give a probabilistic upper bound on the size of $A$. Since $d(v) \sim Bin(n-1,p)$,  $\E{|A|} \leq n^{1-\min\set{c-1/3,2c/3}}$. The Markov inequality implies $A=\emptyset$ with high probability if $c\geq 2$. Suppose then that $c\leq 2$. Then,
$$\Pr{ |A| \geq n^{17/12 -c }} < \Pr{ X \geq n^{1/12} \E{|A|}}< n^{-1/12}=o(1).$$  

Now let a cycle be small if has at most $\La$ vertices. Next we show (ii), that no vertex of $A$ is part of a small cycle or within distance $\La$ of a small cycle. To show the former,  let $X_j$ be the number of cycles on $j$ vertices that contain a vertex of $A$. Observe
\begin{align*}
\Pr{X_j>0}\leq \E{X_j}
&\leq n^j p^{j}\Pr{ Bin(n-3, p) < \frac{np}{100}-2}\\
&\leq\left(c \log n\right)^j n^{1/3-c}=o(1),
\end{align*} 
for $j\leq \La$ and $c\leq 2$.

Next, let $X_{j, \ell}$ be the number of structures that contain a $k$-cycle with path of length $\ell$ to a vertex in $A$. Observe
\begin{align*}
\Pr{X_{j, \ell}>0}\leq \E{X_{j, \ell}}
&\leq n^{j+\ell} \left( \frac{c\log n}{n}\right)^{j+\ell}\Pr{ Bin(n-2, p) < \frac{ c \log n}{100}-1}\\
&\leq \left(c \log n\right)^{j+\ell} n^{1/3-c}=o(1),
\end{align*} 
for $j,\ell\leq \La$ and $c\leq 2$.
 
Finally we show (iii), that no two vertices of $A$ are within distance $\La$ of each other.  Let $P$ be number of paths of length at most $\La$ with both ends in $A$. 
\begin{align*} 
\Pr{P>0} \leq \E{P}&\leq  n^2 \left( \Pr{ Bin(n-2,p) < \frac{c \log n}{100}-1}  \right)^2 \sum_{i=0}^\La n^{i} \left( \frac{c \log n }{n}\right)^{i+1}\\
 &\leq 2(c\log n)^{\La+1} n^{5/3-2c}=o(1),
\end{align*}
for $c\leq 2$.
(\ref{subgraphs}) Let now $X_j$ be the number of subgraphs $H$ with $v_H=j\leq 50$ and $e_H \geq v_H +1$ in $G$. Let $\n_j=O(2^{j^2})$ be the number of graphs $H$ on vertex set $[j]$ vertices with  $e_H \geq v_H +1$. Then we have
\begin{align*}
\Pr{X_j>0}\leq \E{X_j}\leq \n_j n^jp^{j+1}\leq \n_jn^{1/2}p =o(1).
\end{align*}
(\ref{big s}) Let $s= |S|$. We apply the Chernoff bound \eqref{Chern1} to obtain
$$\Pr{ e(S, \overline{S}) < \tfrac12s(n-s)p} \leq \exp\left( -\frac{s(n-s)p}{8}\right)= n^{-s(n-s)c/8n}.$$ 
Let now $X_s$ be the number of subsets $S$ of size $s\in[1/p,n/2]$  for which $e(S, \overline{S}) < \frac12s(n-s)p$. 
$$\Pr{X_s>0} \leq \E{X_s}\leq \binom{ n}{s} n^{s(n-s)c/8n}
 \leq \left( \frac{ne}{s}\cdot n^{-c/32}\right)^s \leq \left(e c\log n\cdot  n^{-c/32}\right)^s= o(n^{-1}).$$
It follows that with high probability $X_s=0$ for all $1/p\leq s\leq n/2$.
(\ref{small s}) Let $s= |S|$, and let now $X_s$ be the number of sets of size $s\leq 1/p$ with $e(S,S) \geq snp/1000$. Then,
\begin{align} 
 \Pr{X_s >0} &\leq \E{X_s} \leq \binom{n}{s} \binom{\binom{s}{2}}{snp/1000} p^{snp/1000}\nonumber\\
  &\leq \left( \frac{ne}{s}\right)^s \left( \frac{ s e }{ 2n/1000}\right)^{snp/1000} \nonumber\\
 & =\bfrac{s}{n}^{(-1+np/1000)s}\left(500e^{1+np/100}\right)^s.\label{summ}
\end{align}
Summing the RHS of \eqref{summ} for $1\leq s\leq 1/p$ completes the proof.
\end{proof}
The following claim describes the stationary distribution $\pi_v$ of the modified walk $\cW$ on a typical graph $G$. 
\begin{claim} \label{piv} 
Let $G$ be a typical graph and let $\ve >0$ be an arbitrarily small constant. Let $U$ be the set of vertices such that degree of $u$ and the degrees of all its neighbors are in the range $\left((1-\ve) c\log n,(1+\ve) c\log n\right)$. Then $|U|\approx n$ and for $u \in U$, 
$$\frac{1-\ve}{(1+\ve)n} \lesssim \pi_u \lesssim \frac{1+\ve}{(1-\ve)n}. $$ 
Moreover, for $b_1=\frac{1-\ve}{1+\ve}$  and $b_2=\frac{401}{1+\ve}$,  for all $v \in V$, 
\beq{b1b2}{
\frac{ b_1}{ n} \lesssim \pi_v \lesssim \frac{b_2}{n}.
}  
\end{claim}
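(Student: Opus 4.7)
The plan is to use $\pi_v = \Psi(v)/W$ with $W = \sum_{u \in V}\Psi(u)$. I would (i) show $|U| \approx n$; (ii) pin down $\Psi(u)$ for $u \in U$; (iii) establish the universal bounds $1 \le \Psi(v) \le 401$ for every $v$; and (iv) combine (i)--(iii) to bound $W$ and conclude.

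For step (i), I would count vertices outside $U$. By properties (\ref{t1}) and (\ref{t2}), at most $2n^{1-\varepsilon^2 c/4}$ vertices have ``bad'' degree, i.e.\ degree outside $((1-\varepsilon)np,(1+\varepsilon)np)$, and by (\ref{maxd}) each has at most $4np$ neighbors. A vertex is excluded from $U$ only if it is itself bad or has a bad neighbor, so
\[
|V \setminus U| \le (4np+1)\cdot 2n^{1-\varepsilon^2 c/4} = O\!\left(n^{1-\varepsilon^2 c/4}\log n\right) = o(n),
\]
giving $|U| \approx n$. For step (ii), for $u \in U$ every $w \in N(u)$ satisfies $d(u), d(w) \in ((1-\varepsilon)np, (1+\varepsilon)np)$, so $\min\{d(u),d(w)\}$ lies in the same range and
\[
\frac{1-\varepsilon}{1+\varepsilon} \;\le\; \frac{d(u)}{(1+\varepsilon)np} \;\le\; \Psi(u) \;\le\; \frac{d(u)}{(1-\varepsilon)np} \;\le\; \frac{1+\varepsilon}{1-\varepsilon}.
\]

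For step (iii), the lower bound $\Psi(v) \ge 1$ is immediate since $\min\{d(v),d(w)\}\le d(v)$ makes each summand at least $1/d(v)$. The universal upper bound is where property (\ref{set A}) does the real work. Let $A$ be the set of vertices of degree $<np/100$. If $v \in A$, then (\ref{set A})(iii) forces every $w \in N(v)$ to lie outside $A$, so $d(w) \ge np/100 > d(v)$ and $\Psi(v) = d(v)\cdot(1/d(v)) = 1$. If instead $v \notin A$, split $N(v)$ according to $A$: for $w \notin A$ we have $\min\{d(v),d(w)\} \ge np/100$, contributing in total at most $d(v)\cdot 100/(np) \le 400$ (using $d(v) \le 4np$ from (\ref{maxd})); and $v$ has at most one neighbor in $A$, since two such neighbors would place two $A$-vertices at distance $2 \le \Lambda$, contradicting (\ref{set A})(iii). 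That neighbor contributes at most $1$, so $\Psi(v) \le 401$ for all $v$.

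Finally, for step (iv), I would write $W = \sum_{u \in U}\Psi(u) + \sum_{v \notin U}\Psi(v)$; the second sum is at most $401\cdot|V\setminus U| = o(n)$, while the first lies between $|U|(1-\varepsilon)/(1+\varepsilon)$ and $|U|(1+\varepsilon)/(1-\varepsilon)$. Since $|U| \approx n$, this yields $W \approx n$ (with the asserted $\varepsilon$-dependent factors), and dividing the bounds on $\Psi(u)$ and on $\Psi(v)$ by $W$ produces the claimed inequalities for $\pi_u$ and \eqref{b1b2}. The main obstacle is the universal upper bound $\Psi(v) \le 401$: controlling the contribution from low-degree neighbors needs the isolation property (\ref{set A})(iii), and this is the only step that really uses typicality of $G$ beyond the basic degree bounds.
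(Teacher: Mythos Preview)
Your proposal is correct and follows essentially the same approach as the paper: bound $\Psi(v)$ universally by $[1,401]$ using the degree cap (\ref{maxd}) and the isolation of $A$-vertices (\ref{set A})(iii), bound $\Psi(u)$ for $u\in U$ via the degree window, show $|V\setminus U|=o(n)$, and combine to get $W\approx n$. Your write-up is in fact more explicit than the paper's---you spell out the case split $v\in A$ versus $v\notin A$ for the $401$ bound and correctly attribute $|U|\approx n$ to (\ref{t1}), (\ref{t2}), (\ref{maxd}) rather than (\ref{set A})---but the argument is the same.
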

\begin{proof} 
We refer the reader to \eqref{pi} for the value of $\p_v,v\in V$. We first observe that 
$$\Psi(v)\geq \sum_{w\in N(v)}\frac{1}{d(v)}=1\text{ for all }v\in V.$$
Since each vertex has degree at most $4c \log n$ and has at most one neighbor of degree less than $c \log n /100$, 
\beq{Psiv}{
\Psi(v)\in [1, 401]\text{ for }v\in V.
}
On the other hand, for $u\in U$, $\Psi(u)\in \left[1,\frac{1+\e}{1-\e}\right].$  Lemma \ref{typical}(\ref{set A}) implies that $|V \setminus U|= o(n).$ Therefore  
$$n\leq \sum_{v \in V} \Psi(v) \lesssim \bfrac{1+\ve}{1-\ve}n,$$ 
 and the statement follows. 
\end{proof}

The following lemma shows that the mixing time of $\cW_u,u\in V$ on a typical graph is $O( \log (n))$, as stated in \eqref{T}. 

\begin{lemma}\label{mixing time claim} 
Let $G=(V,E)$ be typical, and let $u$ be an arbitrary vertex of $G$. Let $P_u^{(t)}(x)$ be the probability that $\cW_u$ is at vertex $x$ at time $t$. Then  
for $b_1, b_2$ as in Claim \ref{piv}, $L= 10(8000b_2)^2/b_1^2$, and $T=L \log n $, 
$$|P_u^{(T)}(x)- \pi_x| \leq n^{-3}.$$
\end{lemma}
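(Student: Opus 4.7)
The plan is to bound the conductance $\Phi$ of the reversible chain $\cW_u$ from below by a positive constant and then invoke Cheeger's inequality together with the standard reversible $L^2$ mixing bound $|P_u^{(t)}(x)-\pi_x|\le\sqrt{\pi_x/\pi_u}\,\lambda_*^t$. Writing $\pi(S)=(2w)^{-1}\sum_{u\in S}\Psi(u)$ and $Q(S,\bar S)=(2w)^{-1}W_{\mathrm{cut}}(S)$, where $W_{\mathrm{cut}}(S):=\sum_{\{u,v\}\in E(S,\bar S)}1/\min(d(u),d(v))$, and using $\pi(S)\le b_2|S|/n$ together with $2w\approx n$ from (the proof of) Claim~\ref{piv}, the target $\Phi\ge 1/(8000b_2)$ reduces to the edge-weight inequality $W_{\mathrm{cut}}(S)\gtrsim|S|$ uniformly over $S$ with $\pi(S)\le 1/2$.

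I would split by $|S|$. When $|S|\ge 1/p$, Lemma~\ref{typical}(\ref{big s}) gives $e(S,\bar S)\ge |S|(n-|S|)p/2\ge |S|np/4$, and combining with the maximum-degree bound $\min(d(u),d(v))\le 4np$ from (\ref{maxd}) yields $W_{\mathrm{cut}}(S)\ge |S|/16$. When $|S|<1/p$, decompose $S=S_A\sqcup S_B$ with $S_A=S\cap A$ and $S_B=S\setminus A$, where $A$ is the set of low-degree vertices from (\ref{set A}). Part (iii) of (\ref{set A}) makes $A$ an independent set at pairwise distance $>\Lambda$, so every $v\in V$ has at most one neighbor in $A$. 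The cut weight then lower bounds as the sum of an $S_A$ piece and an $(S_B,\bar S_B)$ piece. For the first, $\sum_{u\in S_A}\Psi(u,\bar S)=|S_A|-\sum_{u\in S_A}|N(u)\cap S_B|/d(u)$, and swapping order of summation with the ``at most one neighbor in $A$'' property gives $\sum_{u\in S_A}|N(u)\cap S_B|/d(u)\le |S_B|$, hence $\sum_{u\in S_A}\Psi(u,\bar S)\ge |S_A|-|S_B|$. For the second, combining $d(S_B)\ge|S_B|np/100$, (\ref{small s}), and the elementary bounds $e(S_B,S_A)\le d(S_A)<|S_A|np/100$ and $e(S_B,\bar S_A)\le |A|np/100$ produces $e(S_B,\bar S_B)\ge np(4|S_B|-5|S_A|-5|A|)/500$, which converts via $\min\le 4np$ into at least $|S_B|/2000$ whenever $|S_B|$ comfortably dominates $|S_A|+|A|$. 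A brief case split on whether $|S_A|$ or $|S_B|$ is the larger contributor combines these estimates to yield $W_{\mathrm{cut}}(S)\ge|S|/8000$ in every regime.

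Once $\Phi\ge 1/(8000b_2)$ is established, Cheeger's inequality gives $1-\lambda_2\ge\Phi^2/2\ge 1/(2(8000b_2)^2)$. A short expected triangle count for typical $G\sim G_{n,p}$ shows each vertex lies in $\Theta((np)^2 p)$ triangles, so $P^2(v,v)$ is bounded below by an order matching the spectral gap; this quantifies non-bipartiteness and bounds $|\lambda_n|$ away from $1$, so $\lambda_*:=\max(|\lambda_2|,|\lambda_n|)\le 1-1/(2(8000b_2)^2)$. Plugging $T=L\log n$ with $L=10(8000b_2)^2/b_1^2$ into the reversible $L^2$ bound gives
\[
|P_u^{(T)}(x)-\pi_x|\le\sqrt{b_2/b_1}\,\exp\!\left(-T/(2(8000b_2)^2)\right)=\sqrt{b_2/b_1}\,n^{-5/b_1^2}\le n^{-3},
\]
since $b_1<1$.

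The main obstacle is executing the small-$S$ conductance analysis cleanly. A pathological $S$ can mix pendant-type vertices in $A$---whose neighbors can all lie in $S_B$ and so contribute nothing to the cut---with high-degree vertices in $S_B$ whose individual cut edges carry only weight $\sim 1/(np)$. The structural input preventing these two weaknesses from conspiring is precisely the distance-$\Lambda$ isolation of $A$ in (\ref{set A})(iii): it forces each $A^c$-vertex to have at most one $A$-neighbor, which simultaneously bounds the $S_A$-side ``loss'' by $|S_B|$ and the $S_B$-side correction by $|A|$, letting the two pieces of $W_{\mathrm{cut}}$ combine favorably across all configurations.
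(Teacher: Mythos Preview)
Your approach is the same as the paper's: bound the conductance from below via a case split on $|S|$, apply Cheeger, and plug into the reversible $L^2$ mixing bound. Even your small-$|S|$ analysis is morally identical --- your ``first piece'' inequality $\sum_{u\in S_A}\Psi(u,\bar S)\ge |S_A|-|S_B|$ is exactly what the paper obtains (phrased there via the set $A^*\subseteq S_A$ of vertices with no $S_B$-neighbour, which satisfies $|A^*|\ge |S_A|-|S_B|$), and your ``second piece'' is the paper's Case~2a. The paper makes the split at the threshold $|S_A|\ge 3|S|/4$: in that case your first piece already gives $W_{\mathrm{cut}}(S)\ge |S|/2$; otherwise $|S_B|>|S|/4$ and one gets $e(S,\bar S)\ge d(S_B)-2e(S,S)\ge |S|np/400-|S|np/500=|S|np/2000$, hence $W_{\mathrm{cut}}(S)\ge |S|/8000$.

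Two things to fix in your write-up. First, the term $-5|A|$ in your displayed lower bound for $e(S_B,\bar S_B)$ is spurious and fatal for small $|S|$: edges from $S_B$ into $A\cap\bar S$ are cut edges of weight at least $1/(4np)$ and should not be subtracted, and in any case $e(S_B,S_A)\le e(S,S)<|S|np/1000$ already, so there is no need to invoke $|A|$ at all. Drop that term and the clean $3|S|/4$ split above goes through. Second, your triangle argument for $|\lambda_n|$ does not do what you claim: the quantity $P^2(v,v)$ is of order $1/(np)$ regardless of triangles, and triangle counts do not directly bound the most negative eigenvalue. The paper simply writes $\lambda_2$ for the second largest \emph{absolute} eigenvalue and applies Cheeger to it; strictly speaking this conflates the second eigenvalue with $\lambda_*$, but the standard remedy is to pass to the lazy chain $(I+P)/2$, which has the same conductance up to a factor of $2$ and nonnegative spectrum, at the cost only of a constant in $L$. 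Your $\Phi\ge 1/(8000b_2)$ should also read $\Phi\ge b_1/(8000b_2)$ once you track the normalization $2w\le n/b_1$; this is what makes the stated value $L=10(8000b_2)^2/b_1^2$ come out right.
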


\begin{proof}  
It is well known, see for example \cite{LPW} that if $\lambda_2$ denotes the second largest absolute value of an eigenvalue of $P$, $\lambda_2<1$ and
\begin{equation} \label{mix} 
|P_u^{(t)}(x)- \pi_x| \leq (\pi_x/ \pi_u)^{1/2} \lambda_2^t\leq (b_2/b_1)^{1/2} \lambda_2^t\leq 21\l^t,
\end{equation} 
where the second inequality follows from \eqref{b1b2} and the third assumes that $\e$ is sufficiently small.

To compute the size of the second largest absolute value of an eigenvalue of $P$, we apply the Cheeger inequality. Recall the definition of conductance:
$$\Phi(G)= \min_{S \subset V(G), \pi(S)\leq \frac{1}{2}} \Phi(S), \quad \text{ where } \quad \Phi(S)= \frac{ \sum_{x \in S, y \in \overline{S}} \pi_x P_{x,y}}{ \sum_{x \in S} \pi_x}.$$ 
Applying Claim \ref{piv} we observe 
$$ \Phi(S) = \frac{ \sum_{x \in S, y \in \overline{S}} \pi_x P_{x,y}}{ \sum_{x \in S} \pi_x} > \frac{\frac{b_1}{n} \sum_{x \in S} \frac{e(\{x\}, \overline{S})}{d(x)}}{|S| \frac{b_2}{n}}= \frac{b_1}{|S| b_2} \sum_{x \in S} \frac{e(\{x\}, \overline{S})}{d(x)}.$$
Define 
$$D(S):=\sum_{x \in S} \frac{e(\{x\}, \overline{S})}{d(x)}.$$  
We give a lower bound on $D(S)$ for all subsets $S$ for which $\pi(S) \leq \frac{1}{2}$.   By Conditions \ref{t1}, \ref{t2} of Lemma  \ref{typical} and Claim \ref{piv}, a set $S$ for which  $\pi(S) \leq \frac{1}{2}$ can have cardinality at most $\frac{n+o(n)}{2} \left( \frac{1+\ve}{1-\ve}\right)^2\leq \frac{2n}{3}$, assuming $\e$ is sufficiently small. 

\noindent {\bf Case 1:} $1/p\leq |S| \leq  2n/3$.\\  
By Conditions \ref{maxd} and \ref{big s} of Lemma \ref{typical} 
\beq{Phi1}{
D(S)\geq \frac{ e(S, \overline{S})}{4np}\geq \frac{|S|(n-|S|)}{8n } \quad \text{ and so } \quad  \Phi(S)\geq \frac{b_1 (n-|S|)}{ 8n b_2} \geq \frac{b_1}{24b_2}.
}
{\bf Case 2:} $|S|<1/p$.\\
To evaluate the case when $|S| < 1/p$, we consider two subcases. Let $A$ be the set of vertices of degree less than $\frac{np}{100}$, as in Lemma \ref{typical}. 

\noindent {\bf Case 2a:} $|A\cap S| < 3|S|/4$.\\ 
By Condition \ref{small s} of Lemma \ref{typical}, we have 
$$e(S, \overline{S}) \geq d(S)- 2e(S,S)\geq \frac{|S|}{4} \frac{np}{100}- 2 |S| \frac{np}{1000}.$$
It follows that 
\beq{Phi2}{
D(S) \geq \frac{\frac{|S|}{4} \frac{np}{100}- 2 |S| \frac{np}{1000}}{ 4 c\log n} = \frac{|S|}{8000}  \quad \text{ and so } \quad  \Phi(S)  \geq \frac{b_1}{8000b_2}.
}
{\bf Case 2b:} $|A\cap S| \geq 3|S|/4$.\\ 
Let $A^{\ast} \subseteq A \cap S$ be the vertices with no neighbors in $S\setminus A$. Since each vertex has at most one neighbor in $A$, $|A^{\ast}| \geq  |S|/2$. We compute 
\beq{Phi3}{
D(S) \geq \sum_{x \in A^{\ast}} \frac{ e( \{x\} , \overline{S})}{d(x)}= |A^{\ast} | > \frac{|S|}{2}  \quad \text{ and so } \quad  \Phi(S)  \geq \frac{b_1}{2b_2}.
}
 
It follows from \eqref{Phi1}, \eqref{Phi2}, \eqref{Phi3} that  $\Phi(G) \geq \frac{b_1}{8000b_2}$, and so by the Cheeger inequality 
$$\lambda_2 \leq \left(1 - \frac{\Phi^2}{2}\right) \leq  1 - \frac{b_1^2}{2(8000b_2)^2}.$$  
Let $L= 10(8000b_2)^2/b_1^2>-4 /\log\left( 1 - \frac{b_1^2}{2(8000b_2)^2}\right)$. Letting $t=L \log n$ in \eqref{mix},  we see that
$$|P_u^{(t)}(x)- \pi_x| \leq  21\left( 1 - \frac{b_1^2}{2(8000b_2)^2}\right)^{-L\log n} <n^{-3}.$$
\end{proof}

Finally, we give upper bounds on the values $R_v$, the expected number of times that $\cW_v$ returns to $v$ in $T$ steps, where $T$ is as defined in \eqref{T}. Here we refer to the set of vertices within distance $k$ of $v$ as the $k-$neighborhood of a vertex $v$.

\begin{lemma} \label{rv}
 Let $G$ be typical. Let 
\begin{align*}
A&=\set{v:d(v)\leq \frac{np}{100}}.\\
B&=\set{v:N_{10}(v)\cap A=\emptyset}.
\end{align*}
(Thus $A\cap B=\emptyset$.)

Then 
$$1\leq R_v\leq \begin{cases}1+O\bfrac{1}{\log n}&v\in B\text{ or }(v\notin A\cup B\text{ and }N(v)\cap A=\emptyset).\\ 1+\frac{C}{d(v)}& v\in A\\1+\frac{C}{d(v_1)}.&N(v)\cap A=\set{v_1}.\end{cases}$$
where $C$ is an absolute constant. 
\end{lemma}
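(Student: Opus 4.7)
The lower bound $R_v \geq 1$ is immediate from $r_0 = 1$. For the upper bound, the plan is to exploit the local tree structure of $G$ around $v$ guaranteed by conditions \ref{subgraphs} and \ref{set A} of Lemma \ref{typical}: the $\La$-neighborhood $N_\La(v)$ is a tree apart from possibly one small cycle, and condition \ref{set A}(iii) ensures it contains at most one vertex of $A$, namely either $v$ itself, a unique $v_1 \in N(v)$, or none. I would bound $R_v - 1 = \sum_{j=1}^{T-1} r_j$ by comparing to the corresponding generating function on the infinite local tree $T_v^\infty$ obtained by unfolding $N_\La(v)$. Since cycles inside $N_\La(v)$ are rare (condition \ref{subgraphs}) and closed walks of length $> 2\La$ must exit $N_\La(v)$ and so contribute at most $O(T\pi_v) = O((\log n)/n) = o(1/\log n)$ by Lemma \ref{mixing time claim} and Claim \ref{piv}, we get $R_v \leq R^{T_v^\infty}(1) + o(1/\log n)$. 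It therefore suffices to bound the infinite-tree generating function $R^{T_v^\infty}(1)$.

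The key technical input for this tree computation is that $\Psi(w) = 1 + O(1/d(w))$ for every $w \notin A$, which holds because by condition \ref{set A}(iii) such $w$ has at most one $A$-neighbor; consequently from $w$ the probability of jumping to a specific low-degree neighbor $u$ is essentially $1/d(u)$. In Case 1 ($v \in B$, or $v \notin A\cup B$ with $N(v)\cap A = \emptyset$), every neighbor $w$ of $v$ has $d(w) \geq np/100$, so $P(v,w)P(w,v) = O(1/(np)^2)$; summing over the at most $4np$ neighbors gives $r_2 = O(1/\log n)$, and on this high-degree tree the full generating function is bounded by the classical formula $R^{\rm tree}(1) = (d-1)/(d-2) = 1 + O(1/d)$, yielding $R_v \leq 1 + O(1/\log n)$. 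An $A$-vertex lying at distance $\geq 2$ from $v$ inside $N_\La(v)$ (possible only when $v\notin B$) adds at most $O(1/\log n)$ further, since reaching it requires two high-degree-biased steps.

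In Case 2 ($v \in A$ with $d(v) = k$), $v$'s neighbors are all in $V\setminus A$ with $\Psi(w)\approx 1$, so $P(v,w) = 1/k$ uniformly and $P(w,v) \approx 1/k$; the walk bounces between $v$ and $N(v)$ with return probability $\approx 1/k$ per excursion, producing the geometric series $\sum_{\ell \geq 1}(1/k)^\ell = O(1/d(v))$. In Case 3 ($v\notin A$, $N(v)\cap A = \{v_1\}$), returns via non-$v_1$ neighbors contribute $O(1/\log n)$ exactly as in Case 1, while returns through $v_1$ are bounded by noting that the walk reaches $v_1$ with probability $1/(d(v_1)+1)\approx 1/d(v_1)$ per visit to $v$, performs Case 2 bouncing around $v_1$ accounting for an $O(1)$ multiplicative factor, and from each visit to $v_1$ returns to $v$ with probability $1/d(v_1)$; summing yields $R_v \leq 1 + C/d(v_1)$. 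The main obstacle is this interleaved bookkeeping in Case 3, which requires a coupled geometric-series estimate on the two-vertex core $\{v, v_1\}$, or equivalently a direct computation of the return generating function on the two-level tree built from $v$ and $v_1$ together with their subsequent high-degree children.
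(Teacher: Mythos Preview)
Your reduction to an infinite tree has a genuine gap. The sentence ``closed walks of length $> 2\La$ must exit $N_\La(v)$ and so contribute at most $O(T\pi_v)$'' is doubly wrong. First, closed walks from $v$ of arbitrary length can stay inside $N_\La(v)$ (e.g.\ $v,w,v,w,\dots$), so the claim is literally false; what is true is only that a closed walk which is \emph{not} null-homotopic must have length $>2\La$ when $N_\La(v)$ is a tree. Second, and more seriously, even granting that reformulation you still need $\sum_{t=2\La+1}^{T-1} r_t^G = o(1/\log n)$, and nothing you cite gives this: Lemma~\ref{mixing time claim} controls $r_t$ only for $t\ge T$, not for the whole window $2\La<t<T$, so the bound $O(T\pi_v)$ is unjustified. (Also, condition~\ref{subgraphs} concerns subgraphs on at most $50$ vertices and does not by itself make $N_\La(v)$ a tree; and the separate regime $np>n^{1/100}$, where condition~\ref{subgraphs} is unavailable, is not addressed.)

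The paper avoids the tree comparison entirely. It projects $\cW_v$ onto its distance from $v$ and couples with a birth--death chain $\hW$ on $\{0,1,2,3,4\}$ with left/stay probability $\alpha=\rho=200/np$; the crucial estimate is $E_3\le 2T\alpha^2=O(T/(np)^2)=O(1/\log n)$, which is exactly the ``intermediate-$t$'' control your argument is missing. Cases~2 and~3 are then handled by one-step recursions that feed Case~1 back in: for $v\in A$ one writes $R_v\le 1+\frac{1}{d(v)}\sum_{w\in N(v)}\frac{R_w}{1+d(v)/401}$ and plugs in $R_w=1+o(1)$; for $v\notin A\cup B$ one writes $R_v\le 1+\alpha R_v+\rho_{\bar v}R_v$ with $\alpha$ the transition probability to the distinguished neighbor $v_1$ and $\rho_{\bar v}=O(1/\log n)$ the return probability with edge $\{v,v_1\}$ deleted, giving $R_v\le (1-\alpha+O(1/\log n))^{-1}$. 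This replaces your ``interleaved bookkeeping on the two-vertex core'' by a single clean inequality.
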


\begin{proof} 
For the majority case of $v\in B$, we estimate $R_v$ by projecting the random walk in the neighborhood of $v$ onto the nonnegative integers, with $v$ corresponding to zero. Divide the vertices of $V$ into levels based on their distance from $v$. Let $\a$ be an upper bound on the probability the walk moves from level $i\leq 4$ to level $i-1$, $\r\geq \a$ be an upper bound on the probability the walk stays in level $i$, and $\b=1-\a-\r$ (a lower bound on the probability that the walk moves from level $i$ to level $i+1$, that will soon be seen to be non-negative). We couple $\cW_v$ with a random walk $\hW$ on $\set{0,1,2,3,4}$ with parameters $\a,\b,\r$ so that $\hW\leq \cW_v$ whenever $\cW_v$ is within distance 4 of $v$. When $\hW$ is at 4, it moves to 3 at the next step. Otherwise, if $\cW_v$ is at level $0<j\leq 3$ and $\hW$ is at $0<i<j$ then we couple the two walks so that $\hW$ will move left or stay if $\cW_v$ moves left.  Also, if $\cW_v$ is at level $0<j\leq 3$ and $\hW$ is also at $j$ then $\hW$ will move left if $\cW_v$ moves left. If $\hW$ is at 0 and $\cW_v$ is at 1 and $\cW_v$ moves left, then $\hW$ will stay at 0. Here we use the assumption $\r\geq \a$. We will in fact simplify matters by taking $\a=\r$. It remains to define $\a=\r,\b$ and to estimate the expected number of times that $\hW$ visits 0, if it starts there.

Let $E_i$ be an upper bound on the expected number of times that $\hW$, beginning at $i$, visits $0$ in $T$ steps. We have, 
\beq{ineq1}{
E_0\leq 1+\r E_0+(1-\r)E_1\text{ which implies that }E_0 \leq 1+ 2\r+E_1,
}
assuming that $\r\leq 1/2$.

More generally 
\beq{ineq2}{
E_i\leq  \a E_{i-1} + \r E_i +\b E_{i+1}\text{ for }1\leq i\leq 3.
}  
Note also that 
\begin{equation}\label{E4} 
E_3 \leq T\a^2\sum_{\ell\geq0}\brac{\a(1-\a)}^{\ell}=\frac{T\a^2}{1-\a(1-\a)}\leq 2T\a^2
\end{equation} 
since each time the walk moves left from 3 it has a less than $\a^2\sum_{\ell\geq 0}\brac{\a(1-\a)}^{\ell}$ chance of reaching zero before returning to 3. (Note that $\a(1-\a)$  increases for $\a\leq 1/2$ and we will only use this estimate when $\a=o(1)$.) Here $\ell$ is the number of times that $\hW$ moves 2,1,2 before finally moving to zero in two steps. (Included in 2,1,2 there might be some $x,x,\ldots,$ where $x\in\set{1,2}$.)

Summing the inequalities in \eqref{ineq2} for $i=1,2$ yields 
$$ 0 \leq \a E_0 -\b E_1 -\a E_2 +\b E_3\leq  \a E_0 -\b E_1+\b E_3.$$ 
It follows that
$$E_0 \leq 1+ 2\r+E_1 \leq 1 + 2\r+\frac{\a E_0}{\b} +E_3,$$ 
and so 
\begin{equation} \label{E0} 
E_0 \leq \frac{1+2\r+2\a^2 T}{1-\frac{\a}{\b}}. 
\end{equation}
First we assume $np\leq n^{1/100}$. We consider several cases. 

\noindent {\bf Case 1:  $v \in B$.}\\
 Consider $u$ at distance $i \leq 3$ from $v$. Condition \ref{subgraphs} of Lemma \ref{typical} guarantees there are at most two edges from $u$ to level $i-1$ and at most one edge from $u$ to another vertex in level $i$. Since $u$ and all its neighbors have degree at least $np/100$ we can take $\a=\r=\frac{200}{np}$ and $\b=1-2\r$. It follows from \eqref{E0} that for $v \in B$ and $T=L \log n$, 
\beq{Rv1}{
R_v = E_0 \leq \frac{1+O(1/\log n)}{1-O(1/\log n)}= 1+ O\bfrac{1}{\log n}.
}

\noindent {\bf Case 2: $v\in A$.}\\
We observe that if $\cW_v$ is at $w\in N(v)$ then the probability it moves to $v$ in one step can be bounded by 
\beq{xx}{
\frac{1/d(v)}{1/401+1/d(v)}=\frac{1}{1+\frac{d(v)}{401}}.
}
{\bf Explanation:} We have $\sum_{x\in N(w)\setminus v}\psi(w,x)\geq \frac{np/100-1}{4np}\geq 1/401$. This follows from Condition \ref{maxd} of Lemma \ref{typical}. Also, $\psi(w,v)=1/d(v)$.
It follows that
\beq{Rv4}{
R_v \leq 1+\frac{1}{d(v)}\sum_{w\in N(v)}\frac{R_w}{1+\frac{d(v)}{401}} \leq 1+ \frac{1}{d(v)}\sum_{w\in N(v)}\frac{1+o(1)}{1+\frac{d(v)}{401}}\leq 1+\frac{402}{d(v)}.
}
{\bf Explanation:} For the first inequality, we see that when at $v$, $\cW_v$ chooses a neighbor $w$ uniformly from $N(v)$ and then each return to $w$ yields a return to $v$ with probability as given in \eqref{xx}. Each return to $w$ will avoid using the edge $\set{v,w}$. Thus we can invoke Case 1 to justify the second inequality. 

{ \bf Case 3: $v\not \in A \cup B$. } It follows from Lemma \ref{typical} that the 50-neighborhood of $v$ is a tree, and there exists a single vertex in $a \in A$ in the 10-neighborhood of $v$.  Let $N(v)=\set{v_1,v_2,\ldots,v_d}$ where the unique $a\in N_4(v)$ lies in the sub-tree rooted at $v_1$. Let $\alpha$ be the probability the walk moves from $v$ to $v_1$. Let $\r_{\overline{v}}$ be the probability that $\cW_v$ returns to $v$ if edge $(v,v_1)$ is removed. Then we have 
\begin{equation}\label{rvb} 
R_v \leq 1+\alpha R_v+\r_{\overline{v}}R_v.
\end{equation}
{\bf Explanation:} For the first term assume that the walk always returns from $v_1$ and similarly, for the second term, we assume that the walk always returns to $v$, if it returns to $v_j,j\geq 2$.

We compute 
\begin{equation} \label{alpha} 
\alpha = \frac{\frac{1}{ \min \{ d(v), d(v_1)\}}}{\Psi(v)}\leq   \frac{\frac{1}{ \min \{ d(v), d(v_1)\}}}{ (d(v)-1)/d(v)+1/d(v_1)}.
\end{equation} 
Furthermore, if $R_{\overline{v}}$ denotes the expected number of returns to $v$ if edge $(v,v_1)$ is removed then
\beq{rhov}{
1+\r_{\overline{v}}\leq R_{\overline{v}}\leq 1+O\bfrac{1}{\log n}\text{ implying that }\r_{\overline{v}}=O\bfrac{1}{\log n}.
}
where we have used \eqref{Rv1}. 

It follows from \eqref{rvb} that
\beq{ualp}{
R_v\leq \frac{1}{1-\a+O\bfrac{1}{\log n}}.
}

 We now consider two sub cases. \\
{ \bf Case 3a: $v_1 \in A$. }  For $d(v_1)=1$, it follows from \eqref{alpha} that
$\alpha \approx 1/2$. For $d(v_1) \geq 2$, \eqref{alpha} implies $\alpha \leq \frac{1}{d(v_1)}$. Therefore \eqref{ualp} implies
\begin{equation} \label{d1} 
R_v \leq 1 +O\bfrac{1}{d(v_1)}. 
\end{equation}
{ \bf Case 3b: $v_1 \notin A$. }   It follows from \eqref{alpha} that
$\alpha \leq \frac{100}{np} .$ Therefore \eqref{ualp} implies
\begin{equation} \label{d1a} 
R_v \leq  1 +O\bfrac{1}{\log n}.
\end{equation}

Finally we consider $np>n^{1/100}$.
The Chernoff bounds imply that with high probability $d(x)\approx np$ for all $x\in V$. Now,
\beq{npL}{
R_v\leq 1+\frac{T}{\min_{w\in N(v)}d(w)}=1+o(1).
}
This is because there are at most $T$ instances where $\cW_v$ is at a neighbor of $v$ and then the chance of moving to $v$ on the next step is bounded by $\frac{1}{\min_{w\in N(v)}d(w)}$.

The lemma follows from \eqref{Rv1}, \eqref{Rv4}, \eqref{d1}, \eqref{d1a} and \eqref{npL}.
\end{proof}

\begin{remark}
Arguing as for \eqref{E4} we see that if we allow $\hW$ to move to 5 then
\beq{E4a}{
E_4=O\bfrac{1}{(np)^2}.
}
This will be needed later when we discuss the lower bound. In particular, we need it to verify \eqref{Rz}.
\end{remark}

\section{Cover time} \label{the proof}
In this section we prove upper and lower bounds on the cover time (Lemmas \ref{upper} and \ref{lower} respectively), which together imply Theorem \ref{main}. 
\subsection{The upper bound}
In the proof of the upper bound we apply the first visit time lemma. We rely on Lemma \ref{helper} to show Condition ($\ref{rt}$) of the first visit time lemma. 
\begin{lemma} \label{upper} Let $G=G_{n,p},p=\frac{c\log n}{n}$ with $c\geq 1$ and $\om=(c-1)\log n\to\infty$. Then with high probability, $\overline{C}(G) \lesssim n\log n$.
\end{lemma}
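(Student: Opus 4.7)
The plan is to apply the first visit time lemma together with a union bound over $v \in V$, then convert the resulting tail bound into a bound on $\overline{C}(G)$ via a standard restarting argument; we condition throughout on $G$ being typical, which holds w.h.p.\ by Lemma \ref{typical}. The hypotheses of Lemma \ref{firstvisit} need to be checked: (i) is Lemma \ref{mixing time claim}; (iii) follows from $T = L\log n$ and $\pi_v = \Theta(1/n)$ (Claim \ref{piv}); (ii) follows from Lemma \ref{helper} together with $R_v = O(1)$ from Lemma \ref{rv}. So for $t = \Theta(n\log n)$ we have $\Pr{\mathcal{A}_v(t)} \approx e^{-t\pi_v/R_v}$ uniformly in $v$.

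Fix an arbitrary small $\delta > 0$ and set $t^\star = (1+3\delta) n\log n$. Writing $q_u$ for the probability that $\cW_u$ leaves some vertex of $V$ unvisited throughout $[T, t^\star]$, a union bound gives
\[
q_u \;\leq\; (1+o(1))\sum_{v \in V} e^{-t^\star \pi_v / R_v}.
\]
For a bulk vertex $v \in U \cap B$ (with $U$ from Claim \ref{piv} and $B$ from Lemma \ref{rv}), Claim \ref{piv} and Lemma \ref{rv} give $\pi_v \geq (1 - O(\ve))/n$ and $R_v = 1 + O(1/\log n)$, so $t^\star \pi_v / R_v \geq (1+\delta)\log n$ once $\ve$ is chosen small compared to $\delta$; each such term is at most $n^{-(1+\delta)}$ and their total is $n^{-\delta} = o(1)$. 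Vertices in $B \setminus U$ admit the same estimates and number only $o(n)$ by Claim \ref{piv}, so they too contribute $o(1)$.

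The delicate terms come from $A$ and its neighbors. Lemma \ref{rv} gives $R_v \leq 1 + C/d(v)$ for $v \in A$, and $R_v \leq 1 + C/d(v_1)$ when $v \notin A$ has a unique neighbor $v_1 \in A$, hence $e^{-t^\star \pi_v/R_v} \leq n^{-(1+3\delta)k/(k+C)}$ with $k$ the appropriate degree. Lemma \ref{typical}(\ref{Vk}) gives $|V_k| \leq (3 \log n)^{k+1}$ for $k \leq \Lambda = \log\log n$, and Lemma \ref{typical}(\ref{set A})(iii) ensures each $v_1 \in V_k \cap A$ has all its (at most $k$) neighbors outside $A$; summing, the contribution of degree-$k$ $A$-vertices together with their neighbors is at most $(k+1)(3\log n)^{k+1} n^{-(1+3\delta)k/(k+C)} = o(1)$ per $k$, and stays $o(1)$ summed over $k \leq \Lambda$. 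Vertices $v \in A$ with $d(v) > \Lambda$ satisfy $R_v = 1 + o(1)$ and are handled as bulk vertices, their count bounded by $|A| \leq n^{17/12-c}$.

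Setting $q := \max_u q_u = o(1)$, the restarting inequality $\overline{C}(G) \leq t^\star + q \cdot \overline{C}(G)$ (obtained by conditioning on the walker's state at time $t^\star$ and bounding the residual cover time by $\overline{C}(G)$) yields $\overline{C}(G) \leq (1+o(1))(1+3\delta) n\log n$; since $\delta > 0$ was arbitrary, $\overline{C}(G) \lesssim n\log n$. The main obstacle is the third paragraph: at low-degree vertices $R_v$ is only bounded by an absolute constant, so the per-vertex tail probability $e^{-t^\star \pi_v/R_v}$ decays only like $n^{-c}$ for a small $c$, and it is essential that the polylogarithmic count from Lemma \ref{typical}(\ref{Vk}) and the separation property from Lemma \ref{typical}(\ref{set A})(iii) keep the aggregate contribution $o(1)$.
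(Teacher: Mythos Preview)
Your argument is sound and follows essentially the same path as the paper: verify the hypotheses of Lemma~\ref{firstvisit} via Claim~\ref{piv} and Lemmas~\ref{mixing time claim}, \ref{helper}, \ref{rv}, then split vertices according to whether $R_v = 1+O(1/\log n)$ or $R_v = 1+C/k$ and use the polylogarithmic bound on $|V_k|$ from Lemma~\ref{typical}(\ref{Vk}) for the low-degree terms. The one structural difference is the final conversion step: the paper uses the tail-sum identity $\overline{C}(G) \leq t + \sum_v \sum_{s>t}\Pr{A_s(v)} \approx t + \sum_v (R_v/\pi_v) e^{-t\pi_v/R_v}$ and shows the sum is $O(n)$, whereas you show the union bound $\sum_v e^{-t^\star \pi_v/R_v} = o(1)$ and then invoke a restarting inequality $\overline{C}(G)\leq t^\star + q\,\overline{C}(G)$. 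Both devices are standard and give the same conclusion; your route is slightly cleaner but demands the marginally stronger input $q=o(1)$ rather than merely that the tail sum is $o(t)$.

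One small slip in your case analysis: after handling $B$ you write that the remaining delicate terms come from $A$ and its neighbors, but $V\setminus B$ also contains vertices $v\notin A$ with $N_{10}(v)\cap A \neq \emptyset$ yet $N(v)\cap A = \emptyset$ --- those at distance $2$ through $10$ from $A$, precisely the set the paper calls $C$. They fall into none of your three cases. This is harmless, since Lemma~\ref{rv} gives $R_v = 1+O(1/\log n)$ for such $v$ and your bulk-vertex bound applies verbatim (their count is at most $|A|\cdot(4np)^{10}=n^{o(1)+17/12-c}$), but you should add a clause to close the gap.
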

\begin{proof} With high probability $G$ is typical, as stated in Lemma \ref{typical}.
Let $u$ be an arbitrary vertex of $G$, let $T_g(u)$ be the time taken to visit all vertices by $\W_u$, and let $U_s$ be the number of vertices that haven't been visited at step $s$. Let $A_s(v)$ be the event that $\W_u$ does not visit $v $ in $[T,s]$. For $t >T$, 
$$\overline{C}(G)= \E{T_g(u)}= \sum_{s>0} \Pr{U_s>0}\leq \sum_{s>0} \min\{ 1, \E{U_s}\}\leq t + \sum_{v \in V} \sum_{s>t} \Pr{ A_s(v)}.$$

Next we apply the first visit time lemma (Lemma \ref{firstvisit}) for $T= L \log n$. Lemma \ref{mixing time claim} guarantees Condition (i). The boundedness of $R_v$, proved in Lemma \ref{rv}, implies the assumptions of Lemma \ref{helper}, thereby guaranteeing Condition (\ref{rt}) holds for $\theta=1/4$ and $K$ sufficiently large. Note that $T\pi_v= \Theta(\log n/n)$, therefore Condition (iii) holds. We apply the lemma and compute 
$$\Pr{ A_t(v)}=\lo e^{-tp_v} + o(e^{-t/KT})\approx e^{-\pi_vt/R_v}. $$

Let $A,B$ be as defined in Lemma \ref{rv}.
Then, by the bounds on $\pi_v$ and $R_v$ given in Claim \ref{piv} and Lemma \ref{rv} respectively, 
\begin{enumerate}
\item  $v\in B$ implies that $\frac{R_v}{\pi_v} \leq (1+\th)\bfrac{1+\ve}{1-\ve}n$ for some $\th=o(1)$.
\item $v\in C=\set{v\notin A\cup B\text{ and }N(v)\cap A=\emptyset}$ also implies that $\frac{R_v}{\pi_v} \leq (1+\th)\bfrac{1+\ve}{1-\ve}n$.
\item $v\in V_k$ (vertices of degree $k$, see Lemma \ref{typical}\eqref{Vk}) implies that $\frac{R_v}{\pi_v} \leq \brac{1+\frac{M}{k}}\bfrac{1+\ve}{1-\ve}n$ for some constant $M>0$.
\end{enumerate}
For ease of notation, let $\tau = (1+\th)\left(\frac{1+\ve}{1-\ve}\right)$ and let $ t= \tau n \log n$.  Lemma \ref{firstvisit} shows that for some $\th_2=o(1)$, 
\begin{align*}
\overline{C} (G)&\leq \tau n \log n + (1+\th_2)\sum_{v \in V} \sum_{s>t} e^{-\pi_vs/R_v}\\
&= \tau n \log n+ (1+\th_2)\sum_{v \in V}\frac{R_v}{\pi_v}e^{-\pi_vt/R_v}\\
&\leq \tau n \log n+ (1+\th_2)|B\cup C| \left(\frac{1+\ve}{1-\ve}\right) + \sum_{k\geq 1}\sum_{v\in V_k\setminus (B\cup C)}\frac{R_v}{\pi_v}e^{-\pi_vt/R_v} \\
&\leq \t n\log n+O(n)+\sum_{k\geq 1}\sum_{v\in V_k\setminus (B\cup C)}\frac{R_v}{\pi_v}e^{-\pi_vt/R_v}.
\end{align*}
We complete the proof of the lemma by showing that
\beq{rest}{
\sum_{k\geq 1}\sum_{v\in V_k\setminus (B\cup C)}\frac{R_v}{\pi_v}e^{-\pi_vt/R_v}=o(n)
}
and then letting $\ve \to 0$.

{\bf Proof of \eqref{rest}:} If $k\leq \La$ then using Condition \ref{Vk} of  Lemma \ref{typical}, we have
\begin{multline}\label{smallk}
\sum_{v\in V_k\setminus (B\cup C)}\frac{R_v}{\pi_v}e^{-\pi_vt/R_v}\leq (3\log n)^{k+1}\times \brac{1+\frac{M}{k}}\bfrac{1+\ve}{1-\ve}n\times \exp\brac{-\frac{\t\log n}{\brac{1+\frac{M}{k}}\bfrac{1+\e}{1-\e}}}\\
=o(n),
\end{multline}
provided $\th\geq 2M/\log\log n$ and $\e$ is sufficiently small.

If $k>\La$ then using Conditions \ref{t1}, \ref{t2} and \ref{set A} of Lemma \ref{typical}, we have
\begin{multline}\label{largek}
\sum_{v\in V_k\setminus (B\cup C)}\frac{R_v}{\pi_v}e^{-\pi_vt/R_v}\leq 3n^{1-\e^2c/4}\times \brac{1+\frac{2C}{\La}}\bfrac{1+\ve}{1-\ve}n\times \exp\brac{-\frac{\t\log n}{\brac{1+\frac{M}{k}}\bfrac{1+\e}{1-\e}}}\\
\leq n^{1-\Omega(1)}.
\end{multline}
Here $2n^{1-\e^2c/4}+n^{17/12-c} \bfrac{np}{100}\leq 3n^{1-\e^2c/4}$ is a bound on $|V\setminus (B\cup C)|$. Note that $A=\emptyset$ with high probability as shown in the proof of Condition \ref{set A}.   Equation \eqref{rest} follows from \eqref{smallk} and \eqref{largek}.
\end{proof}
\subsection{The lower bound}
Finally, we give a lower bound on cover time. We observe that Feige's lower bound \cite{Feige} is only claimed to hold for the simple random walk where each neighbor of the current vertex $v$ is equally likely to be chosen as the next vertex to be visited. The following proof  that the cover time of any reversible random walk is $\Om(n \log n)$,
is due to T. Radzik.

For a walk starting from vertex $u$, the expected first return time $T^+_u$ to $u$ satifies $\E{ T^+_u}=1/\pi(u)$. Also, $\E{ T^+_u}$ is at most the commute time  $K(u,v)$ between $u$ and $v$ ($K(u,v) \ge \E{ T^+_u}$). For at least half the vertices $\pi(u) \le 2/n$. Let $S$ be this set of vertices, all with $K(u,v)\ge \E{ T^+_u }\ge n/2$. Let $K_S=min_{i,j \in S} K(i,j)$ then by \cite{KKLV},
\[
 C(G) \ge  \max_{S \subseteq V} K_S \log |S|\ge (n/4) \log (n/2).
\]
Our results imply that with high probability $C_u(G)\approx n\log n$ for all $u\in V$.
\begin{lemma}  \label{lower} 
Let $G \sim G_{n,p}$ where $p= \frac{c \log n}{n}$ and $\om=(c-1)\log n\to\infty$. Then with high probability, $\overline{C}(G) \gtrsim n\log n$.
\end{lemma}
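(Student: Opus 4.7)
The plan is to show that for any fixed small $\e > 0$, at time $t_\e := (1-\e) n \log n$, with high probability over $\cW_u$ some vertex of $G$ remains unvisited. This gives $\E[T_g(u)] \geq (1-o(1)) t_\e$, and letting $\e \to 0$ then yields $\overline{C}(G) \gtrsim n \log n$. Throughout, condition on $G$ being typical. Let $U_t$ denote the number of vertices unvisited by $\cW_u$ during $[T, t]$, and let $B, C$ be the sets from the proof of Lemma \ref{upper}, for which $R_v/\pi_v \approx n$; Lemma \ref{typical} gives $|B \cup C| = n - o(n)$.

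First I would bound $\E[U_{t_\e}]$ from below. By Lemma \ref{firstvisit}, for each $v \in B \cup C$ and with $\ve$ in Claim \ref{piv} chosen small relative to $\e$,
\[
\Pr[A_{t_\e}(v)] \approx e^{-\pi_v t_\e / R_v} \gtrsim n^{-(1-3\e/4)},
\]
so summing over $v \in B \cup C$ yields $\E[U_{t_\e}] \gtrsim n^{3\e/4} \to \infty$.

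The main step is a second moment bound on $U_{t_\e}$. For an ordered pair $u \neq v$ in $B \cup C$ at graph distance greater than $2\La$ in $G$, the $\La$-neighborhoods of $u$ and $v$ are disjoint, and the events $A_{t_\e}(u), A_{t_\e}(v)$ should be approximately independent: I would invoke a two-vertex analogue of the first visit time lemma, in the spirit of the arguments in \cite{CF}, to obtain
\[
\Pr[A_{t_\e}(u) \cap A_{t_\e}(v)] \lesssim e^{-(\pi_u/R_u + \pi_v/R_v) t_\e}.
\]
Pairs within distance $2\La$ are rare: there are only $O(n \cdot (np)^{2\La}) = n^{1+o(1)}$ such pairs (using $\La = \log\log n$ and Lemma \ref{typical}\eqref{maxd}), so their total contribution to $\E[U_{t_\e}^2]$ is at most $n^{1+o(1)} \cdot n^{-(1-5\e/4)} = n^{5\e/4 + o(1)}$, which is $o(n^{3\e/2}) = o((\E[U_{t_\e}])^2)$. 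Hence $\E[U_{t_\e}^2] \leq (1+o(1))(\E[U_{t_\e}])^2$, and Chebyshev's inequality gives $\Pr[U_{t_\e} = 0] = o(1)$.

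The main obstacle is the two-vertex first visit estimate itself: one must show that conditioning on a far-away vertex $u$ remaining unvisited barely changes the probability that $v$ also remains unvisited. This requires adapting the generating-function proof of Lemma \ref{firstvisit} to track returns to $\set{u, v}$ jointly, and in particular verifying a two-vertex analogue of condition (\ref{rt}), which controls the relevant two-vertex return generating function on a disc slightly larger than the unit circle. The tail bound \eqref{E4a} on $E_4$, flagged in the Remark for precisely this purpose, provides the extra control needed once the walk escapes to distance $\geq 4$ from $u$ (or $v$). Once the two-vertex estimate is in hand, $\E[T_g(u)] \geq t_\e \cdot \Pr[U_{t_\e} > 0] \geq (1-o(1))(1-\e) n \log n$, and letting $\e \to 0$ completes the proof.
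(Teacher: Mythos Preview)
Your overall strategy matches the paper's: identify a large set of ``good'' vertices, show via Lemma~\ref{firstvisit} that in expectation many remain unvisited at time $(1-\e)n\log n$, then use a second moment bound and Chebyshev. The implementation of the second moment step, however, differs substantively. You propose to prove a genuine two-vertex analogue of Lemma~\ref{firstvisit}, tracking returns to $\{u,v\}$ jointly. The paper instead \emph{avoids} any new lemma by a contraction trick: it merges $u$ and $v$ into a single vertex $z$ in a graph $\Gamma$, shows (Claim~\ref{clpW}) that avoiding $z$ in $\Gamma$ is exactly avoiding $\{u,v\}$ in $G$, and then applies the one-vertex Lemma~\ref{firstvisit} to $z$. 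For this reduction to work, the paper must ensure that contraction does not perturb the edge weights $\psi$, which is why it restricts to a set $S_0$ of vertices satisfying the local-maximum property P3 ($d(w)\le d(v)$ for all $w\in N(v)$); it further pigeonholes on $R_v$ so that $R_z\approx\tfrac12(R_u+R_v)\approx R_u\approx R_v$, making $\pi_z/R_z\approx \pi_u/R_u+\pi_v/R_v$. Your direct approach sidesteps both the P3 construction and the pigeonhole step, at the cost of having to establish the two-vertex estimate from scratch.

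One omission: your bound ``$O(n\cdot(np)^{2\Lambda})=n^{1+o(1)}$'' on close pairs is valid only when $np$ is polylogarithmic; for $np\ge n^{\alpha}$ with $\alpha>0$ the exponent $2\Lambda\log(np)$ is no longer $o(\log n)$. The paper faces the analogous issue (its target set has size only $n/(5np)^{11}$) and handles it by a separate argument in the high-degree regime $np>n^{\e/25}$, using tight degree concentration to control the perturbation of $\psi$ under contraction. Your write-up should acknowledge a similar case split.
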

\begin{proof}
Let $I=[(1-\e)np,(1+\e)np]$. Let $S_0$ be the set of vertices $v$ such that 
\begin{enumerate}[(P1)]
\item $d(v)\in I$.
\item $d(w)\in I$ for $w\in N_2(v)$.
\item $d(w)\leq d(v)$ for $w\in N(v)$. 
\item $v\in B$, where $B$ is defined as in Lemma \ref{rv}.
\end{enumerate}
It follows from Conditions 2,3 and 6 of Lemma \ref{typical} that the number of vertices satisfying P1,P2 and P4 is $n-o(n)$. 
\begin{claim}\label{clS0}
$$|S_0|\geq \frac{n}{5np}.$$
\end{claim}
\begin{proof}
Let $X_0=\set{v:d(v)>(1+\e)np}$ and $Y_0=X_0\cup N(X_0)$. It follows from Conditions 2,3 and 4 of Lemma \ref{typical} that $|Y_0|=o(n)$.

Next let $k_i=(1+\e)np -i$. Then let $X_1=V_{k_1}\setminus Y_0$ and $Y_1=X_1\cup N(X_1)$. In general we let $X_{i+1}=V_{k_{i+1}}\setminus \bigcup_{j\leq i}Y_i$ and $Y_{i+1}=X_{i+1}\cup N(X_{i+1})$. 

We note that $x\in X_i,i\geq 1$ implies that $d(x)=k_i\geq d(w)$ for $w\in N(x)$. This is because $x$ has no neighbors in $V_\ell$ for $\ell<k_i$.

Now Lemma \ref{typical} implies that $\sum_{i=1}^{2\e np}|X_i\cup Y_i|=n-o(n)$. And then our bound on maximum degree of $4np$ implies that
$$|S_0|\geq \sum_{i=1}^{2\e np}|X_i|\geq \frac{n-o(n)}{4np}.$$

\end{proof}
Given the claim, we divide the possible range for $R_v$ into $\log^2n$ sub-intervals and use the pigeon-hole principle to select a subset $S_1\subseteq S_0$ of size $\Omega\bfrac{n}{np\log^2n}$ such that 
\beq{close}{
|R_u-R_v|\leq \frac{1}{\log^2n}\text{ for }u,v\in S_1.
}

Next let $S$ be a maximum size subset of vertices of $S_1$ such that no two vertices of $S$ are within distance 10 of each other. We show that with $\e$ sufficiently small and for $\d=3\e$, that with high probability the set $S$ will not be covered at time $t =(1- \d) n \log n $. 

We show next that a greedy algorithm applied to $S_1$ produces a set $S$ of size at least $\frac{n-o(n)}{ (4np)^{10}}$. After selecting $k$ vertices from $S_1$, there will be at least $|S_1|- k (4np)^{10} $ vertices in $S_0$ available for the next choice of vertex for $S$. Therefore 
$$|S| \geq \frac{|S_1|}{(4np)^{10}}\geq \frac{n}{(5np)^{11}}.$$

Let $S(t)$ denote the number of vertices in $S$ that have not been visited by the random walk at step $t$. For $t >T$, 
$$ \E{ S(t)}\geq -T+\sum_{v \in S} \Pr{A_v(t)}.$$
We compute 
$$\Pr{ A_v(t)}\approx\frac{1}{(1+ \frac{\pi_v}{R_v})^t}\gtrsim\exp\left(-(1-o(1)) \frac{t(1+\e)}{(1-\e)n}\right)\geq n^{-(1-\e/2)}.$$
It follows 
$$\E{S(t)}=\Omega\bfrac{n^{\e/2}}{(5np)^{11}}\to\infty,$$
assuming that 
\beq{largec}{
np\leq n^{\e/25}.
}
We make this assumption for now and deal with $np>n^{\e/25}$ in Section \ref{highd}.

As in earlier papers, we apply the Chebyshev inequality to show that $S(t)\neq\emptyset$ with high probability. To estimate $\E{S(t)(S(t)-1)}$, we estimate the probability that two distinct vertices $ u, v \in S$ have not been visited by time $t$. Let $\Gamma$ be obtained from $G$ by contracting $u$ and $v$ into a single vertex, which we call $z$. 
\begin{claim}\label{clpW}
The probability a random walk $\overline{\W}_w,w\neq u,v$ in $\Gamma $ doesn't visit $z$ in $t$ steps equals the probability that the random walk $\W_w$ in $G$ visits neither $u$ nor $v$ in $t$ steps. 
\end{claim}
\begin{proof}
Let $\om=(w=v_0,v_1,\ldots,v_t)$ be a walk that does not visit $u,v$. Let $p_W$ and $\hp_W$ be the probabilities that $\cW_u$ follows $W$ in $\om$ and in $\G$ respectively. Then
\beq{pW}{
p_W=\prod_{i=0}^{t-1}\frac{\psi(v_i,v_{i+1})}{\Psi(v_i)},
}
The claim follows from \eqref{pW} and the fact that if $\hps$ equals the induced values of $\psi$ in $\G$ then
\beq{pW2}{
\hps(x,y)=\psi(x,y)\text{ for all }x,y\notin N(u)\cup N(v).
}
and
\beq{pW3}{
\hps(x,z)=\psi(x,u)\text{ for all }x\in N(u).
}
Equation \eqref{pW2} is clear and equation \eqref{pW3} follows from our Condition P3.
\end{proof}
Note that \eqref{E4a} implies that the expected number of returns to $v$ after reaching distance $4$ from $v$ is $O(1/\log^2n)$. Therefore, since all paths between $u$ to $v$ contain vertices at distance at least four from $u$ and $v$, 
\beq{Rz}{
R_z= \frac{1}{2} R_u +  \frac{1}{2} R_v+O\bfrac{1}{\log^2n}.
} 
With respect to steady state probabilities, it follows from \eqref{pW2}, \eqref{pW3} that we have
\beq{piz}{
\pi_z=\p_u+\p_v.
}
It is straightforward to check that the conditions of Lemma \ref{firstvisit} hold for $\G$ with $T=O(\log n)$. It follows from \eqref{PrAt}, \eqref{Rz} and \eqref{piz} that
\begin{align}
\Pr{A_z(t)} &\approx \exp\brac{-\frac{(\p_u+\p_v)t}{\frac{1}{2} R_u +  \frac{1}{2} R_v+O\bfrac{1}{\log^2n}}}\label{newPA}\\
&=\exp\brac{-\frac{(\p_u+\p_v)t}{R_u +O\bfrac{1}{\log^2n}}}\nonumber\\
&\approx \exp\brac{-\frac{\p_ut}{R_u}}\times \exp\brac{-\frac{\p_vt}{R_v}}\nonumber\\
&\approx \Pr{A_u(t)}\times \Pr{A_v(t)}.\nonumber
\end{align}
It follows that
$$ \E{S(t)(S(t)-1)}\lesssim \E{S(t)}^2$$
and so 
$$ \Pr{ S(t) >0} \geq \frac{\E{S(t)}^2}{\E{S(t)^2}} = \frac{\E{S(t)^2}}{\E{S(t)(S(t)-1)}+\E{S(t)}}\geq \frac{1}{1+o(1)+\E{S(t)}^{-1}}=1-o(1).$$
\end{proof}
\subsection{High average degree case}\label{highd} 
We show how to amend the above argument for the case where $np\geq n^{\e/25}$. All vertices satisfy P1,P2 and P4 and we drop P3. We can however claim that with high probability 
\beq{degree}{
|d(v)-np|\leq \sqrt{10np\log n}\text{ for all }v\in V.
}
This follows from applying the Chernoff bounds to $d(v)\sim Bin(n-1,p)$.

We cannot claim \eqref{pW3}, but because $d(z)\approx 2d(u)$, we have instead that with high probability
\beq{pW10}{
\hps(x,z)=\begin{cases}\psi(x,u)&d(x)\leq d(u).\\\psi(x,u)-\frac{1}{d(u)}+\frac{1}{d(x)}&d(x)>d(u).\end{cases}
}
Now, \eqref{degree} implies that
$$\frac{1}{d(u)}-\frac{1}{d(x)}=O\bfrac{\log^{1/2}n}{(np)^{3/2}}.$$
It follows from this that instead of \eqref{piz} we have
$$\p_z=(\p_u+\p_v)\brac{1+O\bfrac{\log^{1/2}n}{(np)^{1/2}}}.$$
Going back to \eqref{newPA} we obtain
\begin{align*}
\Pr{A_z(t)} &\approx \exp\brac{-\frac{(\p_u+\p_v)\brac{1+O\bfrac{\log^{1/2}n}{(np)^{1/2}}}t}{\frac{1}{2} R_u +  \frac{1}{2} R_v+O\bfrac{1}{\log^2n}}}\\
&\approx\exp\brac{-\frac{(\p_u+\p_v)t}{R_u +O\bfrac{1}{\log^2n}}}
\end{align*}
and the proof continues as for the previous case.
\section{conclusion}
We have given an asymptotically tight analysis of the cover time of a biased random walk on $G_{n,p}$. It would certainly be of interest to consider other possible biased walks and also to consider the analysis of the walk in this paper on other models of a random graph. In particular, it would be of interest to analyze the performance of this walk on a preferential attachment graph.

\end{document}